\numberwithin{equation}{section}
\title[Upper bounds on the spectral function of homogeneous spaces]
{Upper bounds for the spectral function  
on homogeneous spaces via volume growth}
\author[C. Judge and R. Lyons]{Chris Judge and Russell Lyons}
\address[cjudge2@gmail.com]{Department of Mathematics, 831 E. 3rd St., Indiana University, Bloomington, IN 47405-7106 USA} 
\address[rdlyons@indiana.edu]{Department of 
Mathematics, 831 E. 3rd St., Indiana University, Bloomington, IN 47405-7106 USA} 
\keywords{Eigenvalues; Laplacian; spectral embedding; compact; noncompact; heat kernel}
\newtheorem{thm}{Theorem}[section]
\newtheorem{coro}[thm]{Corollary}
\newtheorem{lem}[thm]{Lemma}
\newtheorem{prop}[thm]{Proposition}
\theoremstyle{definition}
\newtheorem{defn}[thm]{Definition}
\newtheorem{eg}[thm]{Example}
\theoremstyle{remark}
\newtheorem{remk}[thm]{Remark}
\newcommand{\tb}{\widetilde{b}}
\newcommand{\Sbb}{ {\mathbb S}}
\newcommand{\Rbb}{ {\mathbb R}}
\newcommand{\Zbb}{ {\mathbb Z}}
\newcommand{\Nbb}{ {\mathbb N}}
\newcommand{\Bcal}{ {\mathcal B}}
\newcommand{\Hcal}{ {\mathcal H}}
\newcommand{\Hbb}{{\mathbb H}}
\newcommand{\Obb}{{\mathbb O}}
\newcommand{\dmu}{d \mu}
\newcommand{\und}{\frac{1}{2}}
\newcommand\sfrac[2]{#1/#2}
\providecommand{\norm}[1]{\left\lVert#1\right\rVert}
\numberwithin{equation}{section}
\renewcommand\b[1]{{\bf #1}}
\newcommand\cL{\Delta}
\DeclareMathOperator{\dist}{{dist}}
\DeclareMathOperator{\vol}{\mathsf{vol}}
\newcommand\dfn[1]{\textit{\textbf{#1}}}
\newcommand\iprod[1]{\langle #1 \rangle}
\newcommand\Bigiprod[1]{\Bigl\langle #1 \Bigr\rangle}
\newcommand\R{{\mathbb R}}
\newcommand\C{{\mathbb C}}
\newcommand\mg{g}
\newcommand\ef{\varphi}
\newcommand\cef{\overline\varphi}
\newcommand\ct{N}
\newcommand\st{\,;\;}
\newcommand\cbuldot{{\raise.25ex\hbox{$\scriptscriptstyle\bullet$}}}
\newcommand\Seq[1]{\langle #1 \rangle}
\newcommand\dom{\mathscr{D}}
\newcommand\qdom{\mathscr{Q}}
\begin{document}

%
%

\begin{abstract}
We use spectral embeddings to give upper bounds on the
spectral function of the
Laplace--Beltrami operator on homogeneous spaces
in terms of the volume growth of balls. In the case of compact manifolds,
our bounds extend the 1980 lower bound of Peter Li \cite{Li} for the smallest
positive eigenvalue to all eigenvalues. We also improve Li's bound itself.
Our bounds translate to explicit upper
bounds on the heat kernel for both compact and noncompact homogeneous spaces.
\end{abstract}

%
%

\section{Introduction}

The spectrum of the (nonnegative) Laplace--Beltrami operator $\Delta_g$ associated to a 
compact\footnote{For us, the  
term ``compact manifold" means that there is no boundary. 
Note that homogeneous spaces have no boundary.} Riemannian
manifold $(M,g)$ consists of a discrete set of nonnegative eigenvalues,
each with a finite-dimensional eigenspace.  These eigenvalues rarely admit 
explicit computation, and hence much work has been devoted to their estimation (see, for example, \cite{GN13,Berger}).
To bound the
eigenvalues of $\Delta_g$ in the present paper, we adapt the method of spectral embedding
that was used in \cite{Shayan} to
estimate eigenvalues of the discrete Laplacian on a graph. See \cite{Port} for a history of other uses of
spectral embeddings of manifolds. In particular, 
we provide new lower bounds on the eigenvalues of the Laplacian on
Riemannian manifolds with transitive isometry groups, as well as upper
bounds on their heat kernels.
All of our bounds come with fully explicit constants.

The estimation of the eigenvalues of $\Delta_g$ is equivalent to the
estimation of the \dfn{eigenvalue counting function}, $\ct(\lambda)$. Here, $\ct(\lambda)$ denotes the dimension of $\Hcal_{\lambda}$, the direct sum
of the eigenspaces with eigenvalue at most $\lambda$.  
Denote by 
$V(r)$ the volume of a ball of radius $r$ in $M$ when $M$ is homogeneous (whether or not $M$ is compact). 
In the compact case, our main result is the following bound:

\begin{thm}  \label{thm:main} {\rm (Theorem \ref{thm:transitiveeig})}\enspace
If $(M,g)$ is compact homogeneous space, then for each 
$\alpha \in\, ( 0, \pi/2)$, 
\[    \ct(\lambda)~ \leq~  
\frac{\vol_g(M)}{\int_0^{\pi/2} V(\theta/\sqrt \lambda) \sin(2\theta)
\,d\theta}~
\le~
\frac{\vol_g(M)}{ \cos^2 \alpha \cdot V(\alpha \cdot \lambda^{-\sfrac{1}{2}})} 
\,.
\]
\end{thm}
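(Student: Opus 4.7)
The plan is to introduce a spectral embedding of $M$ into the reproducing-kernel Hilbert space $\Hcal_\lambda$, exploit transitivity of $\Isom(M,g)$ twice, and then conclude via Cauchy--Schwarz against a well-chosen radial test function.

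I would start by fixing an $L^2$-orthonormal basis $\{\varphi_i\}_{i=1}^{\ct(\lambda)}$ of $\Hcal_\lambda$ with $\Delta_g \varphi_i = \mu_i \varphi_i$ and $\mu_i \le \lambda$, and the reproducing kernel $K_\lambda(x,y) = \sum_i \varphi_i(x)\varphi_i(y)$. Define $F \colon M \to \Hcal_\lambda$ by $F(x) = K_\lambda(\cdot, x)$. Because $\Hcal_\lambda$ is $\Isom(M,g)$-invariant, so is $K_\lambda$, and transitivity forces $K_\lambda(x,x)$ to be constant in $x$; orthonormality then identifies this constant as $\ct(\lambda)/\vol_g(M)$. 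Hence the rescaled map $\tilde F := \sqrt{\vol_g(M)/\ct(\lambda)}\, F$ sends $M$ into the unit sphere $S \subset \Hcal_\lambda$.

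Next I would bound the Lipschitz constant of $\tilde F$. Differentiating gives $\|dF_x(v)\|^2 = \sum_i \langle \nabla\varphi_i(x), v\rangle^2$, a positive semidefinite quadratic form on $T_x M$ whose trace $\sum_i |\nabla \varphi_i|^2$ is itself $\Isom(M,g)$-invariant, hence constant in $x$. Integration by parts gives its $L^2$-average as $\vol_g(M)^{-1}\sum_i \mu_i \le \ct(\lambda)\lambda/\vol_g(M)$, and ``operator norm $\le$ trace'' for a positive form yields $\|dF_x\|_{\mathrm{op}}^2 \le \ct(\lambda)\lambda/\vol_g(M)$ pointwise. After rescaling, $\tilde F$ is $\sqrt\lambda$-Lipschitz into $\Hcal_\lambda$, and integrating along a minimizing geodesic transfers this to the intrinsic metric $d_S$ on the sphere, so $d_S(\tilde F(x), \tilde F(y_0)) \le \sqrt\lambda\, d_g(x, y_0)$. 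Since $K_\lambda(x, y_0) = (\ct(\lambda)/\vol_g(M)) \cos d_S(\tilde F(x), \tilde F(y_0))$, this yields the crucial lower bound
\[
K_\lambda(x, y_0) \;\ge\; (\ct(\lambda)/\vol_g(M)) \cos(\sqrt\lambda\, d_g(x, y_0))
\]
on the ball $B(y_0, \pi/(2\sqrt\lambda))$, where both sides are nonnegative.

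To finish, I would pair $K_\lambda(\cdot, y_0)$ against the radial test function $\chi(x) := \cos(\sqrt\lambda\, d_g(x, y_0))\, \mathbf{1}_{B(y_0,\, \pi/(2\sqrt\lambda))}(x)$. The lower bound above forces $\iprod{K_\lambda(\cdot, y_0), \chi}_{L^2} \ge (\ct(\lambda)/\vol_g(M)) \|\chi\|_{L^2}^2$, while Cauchy--Schwarz, together with $\|K_\lambda(\cdot, y_0)\|_{L^2}^2 = K_\lambda(y_0, y_0) = \ct(\lambda)/\vol_g(M)$, gives $\iprod{K_\lambda(\cdot, y_0), \chi}_{L^2} \le \sqrt{\ct(\lambda)/\vol_g(M)}\, \|\chi\|_{L^2}$. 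Combining yields $\ct(\lambda) \le \vol_g(M)/\|\chi\|_{L^2}^2$. The layer-cake formula converts $\|\chi\|_{L^2}^2 = \int_0^{\pi/(2\sqrt\lambda)} \cos^2(\sqrt\lambda r)\, dV(r)$, and integration by parts (with vanishing boundary terms, since $\cos^2(\pi/2)=0$ and $V(0)=0$) followed by the substitution $\theta = \sqrt\lambda r$ produces $\int_0^{\pi/2} V(\theta/\sqrt\lambda) \sin(2\theta)\, d\theta$, which is the first inequality. The second inequality then follows from monotonicity of $V$ together with $\int_\alpha^{\pi/2} \sin(2\theta)\, d\theta = \cos^2\alpha$.

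The main obstacle is the Lipschitz step: passing from the $L^2$-averaged Dirichlet energy bound $\sum_i \mu_i \le \ct(\lambda)\lambda$ to a \emph{pointwise} operator-norm bound on $dF$ (which requires the homogeneity-driven constancy of $\sum_i |\nabla \varphi_i|^2$), and then ensuring that the resulting chordal Lipschitz estimate in $\Hcal_\lambda$ actually upgrades to the spherical-distance bound $d_S \le \sqrt\lambda\, d_g$ needed to produce the sharp $\cos(\sqrt\lambda d_g)$ lower bound on $K_\lambda$.
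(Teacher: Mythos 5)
Your proof is correct and follows essentially the same path as the paper's: the spectral embedding $F$ lands on a sphere by homogeneity, the $L^2$ gradient bound plus operator-norm-$\le$-trace gives the Lipschitz/angle estimate $d_S(\tilde F(x),\tilde F(y_0))\le \sqrt\lambda\, d_g(x,y_0)$, and the reproducing property $\|F_{y_0}\|^2 = F_{y_0}(y_0)$ closes the loop. The only cosmetic difference is at the end: the paper plugs the pointwise bound $\cos\psi(x,y)\ge\cos(\sqrt\lambda\,\dist(x,y))$ into the identity $\rho^{-2} = \int_M\cos^2\psi(x,y)\,d\mu(y)$, while you run the equivalent Cauchy--Schwarz argument against the test function $\chi$; both give the identical inequality $\ct(\lambda)\le\vol(M)/\|\chi\|_{L^2}^2$.
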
 

Peter Li \cite{Li} showed that the first nonzero eigenvalue
of a general compact homogeneous space satisfies
\begin{equation}  \label{eq:transitivegap}
 \sqrt{\lambda_1}~ >~ \frac{\pi}{2  D} \,,
\end{equation}
where $D$ is the diameter of $(M,g)$.
Thus, if $\alpha < \pi/2$, then $\lambda \to V(\alpha \lambda^{-\frac{1}{2}})$ is
decreasing for $\lambda\geq \lambda_1$.  
Consequently, Theorem  \ref{thm:main} implies that if $0< \alpha< \pi/2$, then
\begin{equation} \label{eq:lambdakbound}
\sqrt{\lambda_k}~ \geq~ \frac{\alpha}{V^{-1}\biggl(\frac{\textstyle 
\vrule width 0pt depth 2pt \vol
M}{\textstyle (k+1) \cdot\cos^{\vrule width 0pt height 7pt 2} \alpha }\biggr)}\,,
\end{equation}
where $0=\lambda_0 < \lambda_1 \leq \lambda_2 \leq \lambda_3 \le \cdots$
are the eigenvalues listed in ascending order with multiplicity equal to the dimension 
of the corresponding eigenspace.

In general, for small $r$, we have $V(r)= \omega_d \cdot r^d + O(r^{d+2})$,
where $d$ is the dimension of $M$ and $\omega_d$ is the volume of the unit ball in $\R^d$.   
Thus, it is natural to assume that $V(r) \geq c \cdot r^d$ for $r\le r_0$. 
In such a case, Theorem \ref{thm:main} yields 
the following bound:

\begin{coro} \label{coro:volume-ass}
Let $(M,g)$ be a compact homogeneous space,
and let $r_0$ and $c$ be positive numbers.  
If $V(r) \geq c \cdot r^d$ for $r\le r_0$, then for $\lambda \ge (\pi/2r_0)^2$,
we have
\[  \ct(\lambda)~  \leq~ \frac{\vol(M)}{ c \cdot m_d} \cdot
\lambda^{\sfrac{d}{2}}\,,
\]
where $m_d := \int_0^{\pi/2} \theta^d \sin(2\theta) \,d\theta$.
\end{coro}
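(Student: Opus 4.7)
The plan is to apply the first (stronger) inequality of Theorem \ref{thm:main}, which bounds $\ct(\lambda)$ from above by $\vol_g(M)$ divided by the integral $I(\lambda) := \int_0^{\pi/2} V(\theta/\sqrt{\lambda}) \sin(2\theta)\,d\theta$. To obtain the stated bound, it suffices to show $I(\lambda) \ge c\, m_d\, \lambda^{-d/2}$ whenever $\lambda \ge (\pi/(2r_0))^2$.

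First I would verify that, under the hypothesis $\lambda \ge (\pi/(2r_0))^2$, the argument $\theta/\sqrt{\lambda}$ remains within the range on which the volume lower bound is assumed. Indeed, for every $\theta \in (0,\pi/2)$,
\[
\frac{\theta}{\sqrt{\lambda}} \;<\; \frac{\pi/2}{\sqrt{\lambda}} \;\le\; r_0\,,
\]
so the hypothesis $V(r) \ge c\, r^d$ applies with $r = \theta/\sqrt{\lambda}$ throughout the integration.

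Next I would insert the bound $V(\theta/\sqrt{\lambda}) \ge c\, \theta^d\, \lambda^{-d/2}$ into $I(\lambda)$, pull the $\lambda$-factor out of the integral, and recognize the remaining integral as $m_d$. This gives $I(\lambda) \ge c\, m_d\, \lambda^{-d/2}$. Substituting this lower bound into Theorem \ref{thm:main} yields the claimed inequality.

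There is no serious obstacle here; the only point requiring attention is verifying that the range of $\theta/\sqrt{\lambda}$ stays within $[0, r_0]$ for the stated range of $\lambda$, which is precisely what the hypothesis $\lambda \ge (\pi/(2r_0))^2$ is designed to guarantee.
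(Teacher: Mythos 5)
Your proof is correct and is exactly the intended (and essentially unique) derivation: the paper gives no separate proof of Corollary \ref{coro:volume-ass}, simply stating that Theorem \ref{thm:main} "yields" it, and the short computation you write out — checking $\theta/\sqrt\lambda \le r_0$ on the range of integration, substituting $V(r)\ge c r^d$, and recognizing $m_d$ — is precisely what is meant.
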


Explicit $c$ and $r_0$ can be derived from an upper bound $\kappa$
on the sectional curvatures of $(M,g)$ and from the injectivity radius 
${\rm inj}(M,g)$. Indeed, for $r \leq {\rm inj}(M,g)$, 
we have $V(r) \geq V_{\kappa}(r)$, where 
$V_{\kappa}(r)$ is the volume of the ball of radius $r$ in the simply 
connected homogeneous space of constant curvature $\kappa$  \cite{Gunther}.\footnote{
    See also Theorem 3.10 in \cite{GHL}.} 
In turn, the sectional curvatures of a homogeneous space can be computed 
in terms of the Lie algebra of its isometry group 
(see, for example, \cite{Cheeger-Ebin}).
The function $V_{\kappa}(r)$ can, of course, be computed explicitly; 
see, for example, \cite[\S 3.H]{GHL} or Section \ref{sec:examples} here. 

The bound in Corollary \ref{coro:volume-ass} should be compared to  Weyl's Law,
the well-known large-$\lambda$ asymptotics of $\ct(\lambda)$:
\begin{equation} \label{eqn:Weyl}
\lim_{\lambda \to \infty} \frac{\ct(\lambda)}{\lambda^{\sfrac{d}{2}}}~ =~
 \frac{\omega_d \cdot \vol(M)}{ (2 \pi)^d}\,.
\end{equation}
Since $V(r) \sim \omega_d \cdot r^d$ as $r$ tends to zero,
Corollary \ref{coro:volume-ass} yields that 
\begin{equation} \label{est:almost-Weyl}
  \limsup_{\lambda \to \infty} \frac{\ct(\lambda)}{\lambda^{\sfrac{d}{2}}}~ \le ~
     \frac{\vol(M)}{\omega_d \cdot m_d}  \,.
\end{equation}
If the dimension $d$ is large, then the right-hand side of (\ref{est:almost-Weyl}) is much larger
than the right-hand side of (\ref{eqn:Weyl}). Indeed, a 
straightforward argument shows that for large $d$, we have 
\[ m_d~ \sim~  \frac{\pi^2}{2 d^2} \cdot
\left(\frac{\pi}{2}\right)^{d}\,,\]
and it is well known that $\omega_d \sim (d \pi)^{-\und}
(2 \pi e/d)^{\sfrac{d}{2}}$, where $e=\exp(1)$.

On the other hand, Weyl's law itself does not provide an upper 
or lower bound on $\ct(\lambda)$. Indeed, although the difference 
between the left- and right-hand sides of (\ref{eqn:Weyl}) is 
known to be at most $O(\lambda^{-\und})$ \cite{Hormander}, 
the constant is not explicit. 

Li and Yau  \cite{Li-Yau} and Gromov \cite{Gromov}
have established upper bounds on the eigenvalue counting function
of general compact Riemannian manifolds of the form
$\ct(\lambda) \leq c \cdot \lambda^{\frac{d}{2}}$ for 
$\lambda \geq \lambda_1$.
See Example \ref{eg:spheres} for a comparison of estimates
in the case that $M$ is the standard 2-dimensional sphere.

In the present paper, we also establish the following improvement of 
Li's bound on the first nonzero eigenvalue:
\begin{thm} \label{thm:intro-Li}
{\rm (Theorem \ref{thm:Li-improved})}\enspace
If $(M,g)$ is a connected, compact, Riemannian homogeneous space, then
\[
\sqrt{\lambda_1}~
>~
\frac{\pi}{2D}~ +~
\frac1D \arcsin \frac{V(D/2)}{2 \bigl(\vol M - V(D/2)\bigr)}\,.
\]
\end{thm}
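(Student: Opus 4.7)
The plan is to adapt the spectral-embedding technique of Theorem~\ref{thm:main} to the first eigenspace $\mathcal{H}_{\lambda_1}$ and couple the resulting Lipschitz bound on its reproducing kernel with the mean-zero property to locate a pair of points where the kernel is strongly negative.

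Let $\{\varphi_1,\dots,\varphi_m\}$ be an $L^2$-orthonormal basis of $\mathcal{H}_{\lambda_1}$, set $\Phi(x):=(\varphi_1(x),\dots,\varphi_m(x))\in\mathbb{R}^m$, and write $K(x,y):=\langle\Phi(x),\Phi(y)\rangle$ for the reproducing kernel. Since $\mathcal{H}_{\lambda_1}$ is $\mathrm{Isom}(M,g)$-invariant, both $K(x,x)$ and $\sum_i|\nabla\varphi_i(x)|^2$ are isometry-invariant functions on $M$; homogeneity makes each of them constant, and integrating against $d\vol$ yields $K(x,x)=m/\vol(M)=:C^2$ and $\sum_i|\nabla\varphi_i|^2=\lambda_1 C^2$. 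Hence $\Phi$ takes values in the sphere of radius $C$ in $\mathbb{R}^m$, and the rescaled map $\Phi/C\colon M\to S^{m-1}$ is $\sqrt{\lambda_1}$-Lipschitz with respect to the round metric. Writing $\theta_{pq}$ for the spherical angle between $\Phi(p)$ and $\Phi(q)$, this gives
$$K(p,q)=C^2\cos\theta_{pq}\ \ge\ C^2\cos\bigl(\sqrt{\lambda_1}\,d(p,q)\bigr)\qquad\text{whenever }\sqrt{\lambda_1}\,d(p,q)\le\pi.$$

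Now fix any $p\in M$ and set $\mu:=D\sqrt{\lambda_1}$. Since $\arcsin(V(D/2)/(2(\vol(M)-V(D/2))))\le\pi/6$, we may assume $\mu\le\pi$. For $q\in B(p,D/2)$ the previous display gives $K(p,q)\ge C^2\cos(\mu/2)\ge 0$, whence $\int_{B(p,D/2)} K(p,\cdot)\,d\vol\ge C^2\cos(\mu/2)\,V(D/2)$. On the other hand each $\varphi_i$ is orthogonal to constants, so $\int_M K(p,\cdot)\,d\vol=0$ and therefore $\int_{M\setminus B(p,D/2)} K(p,\cdot)\,d\vol\le-C^2\cos(\mu/2)\,V(D/2)$. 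Because $K(p,\cdot)$ is a nonzero eigenfunction of eigenvalue $\lambda_1>0$, the unique continuation principle prevents it from being constant on the open set $M\setminus B(p,D/2)$, so its minimum there is strictly less than its average: there exists $q\in M\setminus B(p,D/2)$ with
$$K(p,q)\ <\ -C^2\cos(\mu/2)\,s,\qquad s:=\frac{V(D/2)}{\vol(M)-V(D/2)}.$$
Using once more $d(p,q)\le D$ and the Lipschitz bound, $K(p,q)=C^2\cos\theta_{pq}\ge C^2\cos\mu$; combining yields $\cos\mu<-s\cos(\mu/2)$.

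Finally, set $c:=\cos(\mu/2)$ and rewrite via $\cos\mu=2c^2-1$ to obtain the strict quadratic inequality $2c^2+sc-1<0$, forcing $c<(-s+\sqrt{s^2+8})/4$. A short algebraic check shows that $(-s+\sqrt{s^2+8})/4\le\sqrt{(2-s)/4}=\cos\bigl(\pi/4+\frac{1}{2}\arcsin(s/2)\bigr)$ for $s\in[0,1]$ (both sides nonnegative; squaring reduces to $s\le 1$). The assumption $s\le 1$ is automatic here, because whenever $d(p^\ast,q^\ast)=D$ the open balls $B(p^\ast,D/2)$, $B(q^\ast,D/2)$ are disjoint, so $2V(D/2)\le\vol(M)$. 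Therefore $\mu/2>\pi/4+\frac{1}{2}\arcsin(s/2)$, i.e.\ $D\sqrt{\lambda_1}>\pi/2+\arcsin(V(D/2)/(2(\vol(M)-V(D/2))))$. The one delicate point in the write-up is tracking the strict inequality through the chain, which is exactly where unique continuation is needed; everything else is a direct consequence of the spectral-embedding setup used elsewhere in the paper together with the single algebraic manipulation above.
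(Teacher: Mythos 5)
Your proof is correct, and it shares the same backbone as the paper's argument (the mean-zero identity $\int_M K(p,\cdot)\,d\vol=0$ for the $\lambda_1$-eigenspace kernel coupled with the Lipschitz cosine bound $K(p,q)\ge C^2\cos(\sqrt{\lambda_1}\,d(p,q))$), but it executes the split and the endgame differently. The paper splits the radial integral at $r_0=\pi/(2\sqrt{\lambda_1})$, the zero of cosine, and then restricts the positive part to $[0,2r_0/3]$, where $\cos\ge 1/2$; this delivers $\sin(\sqrt{\lambda_1}D-\pi/2)>V(D/2)/\bigl(2(\vol M - V(D/2))\bigr)$ in essentially one step. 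You instead split at the geometrically natural radius $D/2$, which gives the weaker pointwise lower bound $\cos(\mu/2)$ rather than $1/2$, and you then need the double-angle quadratic $2c^2+sc-1<0$ and the algebraic identity $\cos(\pi/4+\tfrac12\arcsin(s/2))=\sqrt{(2-s)/4}$ to close the argument; along the way your intermediate inequality $\cos\mu<-s\cos(\mu/2)$ is actually slightly sharper than the paper's $\cos\mu<-s/2$ when $\mu<2\pi/3$, although both yield the same final statement. Two minor points. First, your appeal to unique continuation is unnecessary: since $\cos(\sqrt{\lambda_1}\,d(p,q))>\cos(\mu/2)$ for $q$ near $p$, the lower bound $\int_{B(p,D/2)}K(p,\cdot)\,d\vol> C^2\cos(\mu/2)V(D/2)$ is already strict, which propagates to a strict upper bound on the average over the complement without invoking analyticity of eigenfunctions. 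Second, the inequality $s\le 1$ (equivalently $2V(D/2)\le\vol M$) is used both to justify the reduction to $\mu\le\pi$ at the start and in the final algebra, so it would read more cleanly to establish it once at the outset.
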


If $(M,g)$ can be realized as a quotient of a Lie group with a 
bi-invariant metric,
then $M$ is called a {\it normal homogeneous space}. In theory, using the
Peter--Weyl theorem, one can compute the spectrum of the Laplacian associated
to a particular compact normal homogeneous space 
(see, for example, section 2 of \cite{BertiProc}).  
Among the much larger class of nonnormal compact homogeneous spaces, 
we are aware of only four examples whose spectrum has been computed. 
These are the cubic isoparametric minimal hypersurfaces of dimensions 3, 6, 12, and 24 that lie in spheres \cite{SolI,SolII}.

Theorem \ref{thm:main} is a consequence of an estimate 
that holds for all connected homogeneous spaces, including 
those that are not compact. In this larger
setting, rather than counting eigenvalues, we estimate the diagonal 
of the \dfn{spectral function} of the Laplacian. The spectral function,
$e_\lambda(x, y)$, is the smooth\footnote{
For us, ``smooth'' will always mean $C^\infty$.}   
integral kernel for the orthogonal
projection $E[0, \lambda]$ onto $\Hcal_\lambda$, where $E$ is the
resolution of the identity for $\Delta$. 
In the compact case,
\begin{equation} \label{defn:integral-kernel}
e_{\lambda}(x,y)~ =~  \sum_{b \in \Bcal_\lambda}  b(x) \cdot b(y)\,, 
\end{equation}
where $\Bcal_\lambda \subset C^{\infty}(M)$ is any real 
orthonormal basis for $\Hcal_{\lambda}$.
Moreover, if $M$ is homogeneous as well as compact, then integration
gives that 
\begin{equation} \label{eq:int-spec}
\ct(\lambda)
~=~ \vol_g(M)  \cdot e_\lambda(x, x)
\end{equation}
for each $x \in M$,
and so the diagonal $e_\lambda(x, x)$ is a natural replacement for $\ct(\lambda)$
in the noncompact homogeneous setting. 

In this larger context, our main result is the following:

\begin{thm}  \label{thm:2ndmain}
{\rm (Theorem \ref{thm:homogeneous-noncompact})}\enspace
If $(M,g)$ is a connected, Riemannian homogeneous space, then 
for each $x \in M$,
$\>\alpha\in\, (0,\pi/2)$, and $\lambda > 0$, we have
\begin{equation*}
e_\lambda(x, x)~ 
\le~
\frac{1}{\int_0^{\pi/2} V(\theta/\sqrt \lambda) \sin(2\theta)
\,d\theta}
~\leq~ 
\frac{1}{\cos^2 \alpha\cdot V(\lambda^{-1/2} \cdot \alpha)}\,.
\end{equation*}
\end{thm}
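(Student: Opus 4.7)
My plan is to exploit the spectral embedding $F_\lambda\colon M \to \Hcal_\lambda$ defined by $F_\lambda(x) := e_\lambda(x,\cdot)$. By the reproducing property, $\|F_\lambda(x)\|^2_{L^2(M)} = e_\lambda(x,x)$, and by homogeneity this common value---call it $R^2$---is independent of $x\in M$. The strategy is to convert volume growth of balls into an upper bound on $R^2$.

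The central geometric step will be to show that $F_\lambda$ is ``Lipschitz with constant $R\sqrt\lambda$'' as a map into the sphere of radius $R$ in $\Hcal_\lambda$: for every tangent vector $v\in T_xM$,
\[
\|dF_\lambda(v)\|^2_{\Hcal_\lambda} = \int_M|v\cdot\nabla_x e_\lambda(x,y)|^2\,dy \leq \lambda R^2|v|^2.
\]
This follows because $F_\lambda(x)\in\Hcal_\lambda$, so the spectral theorem gives $\int|\nabla F_\lambda(x)(y)|^2\,dy \leq \lambda\|F_\lambda(x)\|^2 = \lambda R^2$; pointwise Cauchy--Schwarz, $(v\cdot\nabla)^2 \leq |v|^2|\nabla|^2$, then yields the displayed bound after using homogeneity to exchange the gradient in $x$ with that in $y$ under the integral. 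Since $F_\lambda(M)$ sits on the sphere of radius $R$, the Lipschitz bound implies $\angle\bigl(F_\lambda(x),F_\lambda(y)\bigr)\leq d(x,y)\sqrt\lambda$. Consequently, whenever $d(x,y)\sqrt\lambda \leq \pi/2$,
\[
e_\lambda(x,y) \;=\; R^2\cos\bigl(\angle(F_\lambda(x),F_\lambda(y))\bigr) \;\geq\; R^2\cos(d(x,y)\sqrt\lambda) \;\geq\; 0.
\]

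To finish, I will multiply this inequality by $\cos(d(x,y)\sqrt\lambda)$ and integrate over the ball $B(x,T)$ with $T := \pi/(2\sqrt\lambda)$. An integration by parts in the radial variable converts $\int_{B(x,T)}\cos^2(d(x,y)\sqrt\lambda)\,dy$ into $I := \int_0^{\pi/2} V(\theta/\sqrt\lambda)\sin(2\theta)\,d\theta$, so the inequality above gives
\[
\int_{B(x,T)} e_\lambda(x,y)\cos(d(x,y)\sqrt\lambda)\,dy \;\geq\; R^2 I.
\]
On the other hand, the same integral is bounded above by $\sqrt{R^2 I}$ via Cauchy--Schwarz together with the reproducing identity $\int_M e_\lambda(x,y)^2\,dy = R^2$. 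Squaring both sides of the resulting inequality $R^2 I \leq \sqrt{R^2 I}$ yields $R^2 I \leq 1$, i.e., $e_\lambda(x,x) \leq 1/I$---the first inequality. The second inequality is then immediate from monotonicity of $V$: restricting the integral defining $I$ to $[\alpha,\pi/2]$ and computing $\int_\alpha^{\pi/2}\sin(2\theta)\,d\theta = \cos^2\alpha$ gives $I \geq \cos^2\alpha\cdot V(\alpha/\sqrt\lambda)$.

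The main technical obstacle I anticipate is rigorously justifying the Lipschitz estimate in the noncompact regime. In the compact case one can expand $e_\lambda$ in an eigenbasis and use the equality $\sum_{\lambda_j\leq\lambda}|\nabla\varphi_j|^2 = \sum_{\lambda_j\leq\lambda}\lambda_j/\vol_g(M)$, a consequence of homogeneity, but in the noncompact setting this must be recast via the spectral theorem, and the exchange of $x$- and $y$-gradients handled with some care.
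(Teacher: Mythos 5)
Your proof follows the same strategy as the paper's: embed $M$ via $F_\lambda$ into a sphere of radius $R=\sqrt{e_\lambda(x,x)}$ in $L^2(M)$, bound the angle between $F_\lambda(x)$ and $F_\lambda(y)$ by $\sqrt\lambda\,d(x,y)$ through a Lipschitz estimate, and convert the lower bound on $\int_{B(x,T)}\cos^2$ to the quantity $I:=\int_0^{\pi/2}V(\theta/\sqrt\lambda)\sin(2\theta)\,d\theta$ by the coarea formula and integration by parts. (Your closing Cauchy--Schwarz step is a small detour: the paper gets $R^2 I\le1$ in one line from $R^2=\int_M e_\lambda(x,y)^2\,d\mu(y)\ge R^4\int_{B(x,T)}\cos^2(\sqrt\lambda\,\dist(x,y))\,d\mu(y)=R^4 I$; your version also works.)

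The genuine gap is exactly the one you flag: the Lipschitz bound $\int_M|\nabla_x e_\lambda(x,y)|^2\,d\mu(y)\le\lambda R^2$ in the noncompact setting. You claim this follows from the spectral theorem ``after using homogeneity to exchange the gradient in $x$ with that in $y$ under the integral,'' but that exchange is the whole difficulty, not a routine step. The spectral theorem naturally controls $\int_M|\nabla_y e_\lambda(x,y)|^2\,d\mu(y)=\iprod{\Delta F_x,F_x}$; symmetry of $e_\lambda$ and isometry invariance show only that each of the two gradient integrals is constant in $x$, not that the two constants coincide. The compact case sidesteps this by first proving $\int_M\norm{|\nabla_x F_x|}^2\,d\mu(x)\le\lambda\int_M\norm{F_x}^2\,d\mu(x)$ (Lemma~\ref{lem:intgradbound}) and then using homogeneity to strip the outer $x$-integral---a route closed when $\mu(M)=\infty$. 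The paper's actual argument in \S\ref{sec:noncompact} avoids a gradient exchange altogether: it first establishes the functional-calculus symmetry $(f(\Delta)F_x)(y)=(f(\Delta)F_y)(x)$ (Proposition~\ref{prop:F-properties}), which with symmetry of the kernel yields the Laplacian exchange $\Delta_x F_x(y)=\Delta_y F_x(y)$ (Corollary~\ref{coro:Lap-integrable}); it then proves the integration-by-parts identity $\int_M|\nabla_x F_x|^2\,d\mu=\int_M F_x\,\Delta_x F_x\,d\mu$ (Lemma~\ref{lem:Laplacian-gradient}), whose proof hinges on showing $\int_M\Delta_x F_x^2(y)\,d\mu(y)=0$ by a Fubini/self-adjointness argument that uses the constancy of $x\mapsto e_\lambda(x,x)$ (Corollary~\ref{coro:diagonal-constant}); and all of this is underwritten by the regularity and integrability facts of Theorem~\ref{lem:kernel} and Proposition~\ref{prop:gradient-estimate} in the Appendix. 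So your outline is conceptually correct and correctly identifies homogeneity as the key input, but the bridge from homogeneity to the Lipschitz bound is the main content of \S\ref{sec:noncompact} and is missing from your sketch.
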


By making a natural assumption
on the volume growth of small balls, we obtain the following.

\begin{coro} \label{coro:volume-ass-gen}
Let $(M,g)$ be a connected, Riemannian homogeneous space, and let $r_0$ and $c$ be positive numbers.  
If $V(r) \geq c \cdot r^d$ for $r\le r_0$, then for $\lambda\ge (\pi/2r_0)^2$,
we have
\[  e_\lambda(x, x)~  \leq~ \frac{1}{ c \cdot m_d} \cdot
\lambda^{\sfrac{d}{2}}\,,
\]
where $m_d := \int_0^{\pi/2} \theta^d \sin(2\theta) \,d\theta$.
\end{coro}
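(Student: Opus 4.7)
The plan is to obtain this as a direct consequence of the first inequality in Theorem \ref{thm:2ndmain}, namely
\[
e_\lambda(x,x)~\leq~\frac{1}{\int_0^{\pi/2} V(\theta/\sqrt{\lambda})\sin(2\theta)\,d\theta}\,,
\]
by substituting the hypothesized volume lower bound $V(r) \geq c\cdot r^d$ into the denominator integral.

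First I would verify that the volume hypothesis is in force throughout the integration range. Given $\lambda \geq (\pi/2r_0)^2$, one has $\sqrt{\lambda} \geq \pi/(2r_0)$, so for every $\theta \in (0,\pi/2)$,
\[
\frac{\theta}{\sqrt{\lambda}}~<~\frac{\pi/2}{\sqrt{\lambda}}~\leq~r_0\,.
\]
Hence the standing assumption $V(r) \geq c\cdot r^d$ applies at $r = \theta/\sqrt{\lambda}$, giving $V(\theta/\sqrt{\lambda}) \geq c\cdot\theta^d\cdot\lambda^{-d/2}$ pointwise on $(0,\pi/2)$.

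Next I would integrate this pointwise bound against the nonnegative weight $\sin(2\theta)$ to get
\[
\int_0^{\pi/2} V(\theta/\sqrt{\lambda})\sin(2\theta)\,d\theta~\geq~c\cdot\lambda^{-d/2}\int_0^{\pi/2}\theta^d\sin(2\theta)\,d\theta~=~c\cdot m_d\cdot\lambda^{-d/2}\,,
\]
using the definition of $m_d$. Inverting this inequality and feeding it into Theorem \ref{thm:2ndmain} yields the claimed bound $e_\lambda(x,x) \leq \lambda^{d/2}/(c\cdot m_d)$.

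There is no real obstacle here; the corollary is essentially a repackaging of Theorem \ref{thm:2ndmain} under the small-ball volume assumption, and the only point requiring a moment's care is the threshold on $\lambda$ ensuring that $\theta/\sqrt{\lambda}$ stays within the range $[0,r_0]$ where the hypothesis $V(r)\geq c\cdot r^d$ is known to hold.
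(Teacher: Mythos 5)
Your proof is correct and is exactly the intended derivation: the paper states Corollary \ref{coro:volume-ass-gen} without a written proof, since it follows immediately from the first inequality of Theorem \ref{thm:2ndmain} by inserting the pointwise bound $V(\theta/\sqrt\lambda) \ge c\,\theta^d\lambda^{-d/2}$, which is valid on the whole integration range precisely because $\lambda \ge (\pi/2r_0)^2$ forces $\theta/\sqrt\lambda \le r_0$ for $\theta \in [0,\pi/2]$. Your care about the threshold on $\lambda$ is the one substantive point, and you handled it correctly.
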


Just as in the compact case,
the bound in Corollary \ref{coro:volume-ass-gen} can be compared to
the well-known large-$\lambda$ asymptotics
\begin{equation} \label{eqn:Weyl-gen}
\lim_{\lambda \to \infty} \frac{e_\lambda(x,x)}{\lambda^{\sfrac{d}{2}}}~ =~
 \frac{\omega_d}{ (2 \pi)^d}\,,
\end{equation}
where, again, although the difference 
between the left- and right-hand sides of (\ref{eqn:Weyl-gen}) is 
known to be at most $O(\lambda^{-\und})$ \cite{Hormander}, 
the constant is not explicit. 

Because bounds on the heat kernel yield bounds on the spectral function 
(see Remark \ref{rem:heat-bounds-spectral}), most of
our bounds are new only in that constants are explicit. In addition, our proofs are very short
while not using any sophisticated mathematics.

We establish our main results by
analyzing the geometry of the map $F^\lambda \colon x \mapsto F^\lambda_x$
from $M$ to $L^2(M, \mu)$,
where $F^\lambda_x(y) := e_\lambda(x,y)$. Here is a sketch of our argument:
When $M$ is homogeneous, the image of $F^\lambda$ lies in a sphere of radius
$\rho := \rho_\lambda := \norm{F^\lambda_x} = \sqrt{e_\lambda(x, x)}$; we seek an upper bound on $\rho$.
A curve in $M$ is mapped via $F^\lambda$ to a curve in that sphere. At each
$x \in M$, the
norm of the gradient of $F^\lambda$ is at most $\sqrt\lambda \cdot \rho$.
This allows us to bound the angle $\psi_\lambda(x, y)$ between $F^\lambda_x$ and
$F^\lambda_y$ in terms of $\dist(x, y)$;  of course,
$\rho^2 \cdot \cos \psi_\lambda(x, y) = \iprod{F^\lambda_x, F^\lambda_y} = F^\lambda_x(y) = e_\lambda(x, y)$. 
Therefore, $1 = \rho^2 \int_M \cos^2 \psi_\lambda(x, y) \,d\mu(y)$. The gradient bound on $F^\lambda$ supplies an upper bound on $\psi_\lambda(x, y)$ and thus a lower bound on the preceding integral. This implies an upper bound on $\rho$, as desired.

We end this introduction with an outline of the remainder of the paper.
In \S \ref{sec:prelim}, we recall basic facts about the Laplacian in the
compact case, introduce the spectral embedding $F^\lambda_x(\cdot) =
e_{\lambda}(x,\cdot)$, and prove a simple gradient bound on $F^\lambda$.
In \S \ref{sec:transitive}, we
analyze the length of the image of a geodesic under $F^\lambda$ to prove
Theorem \ref{thm:main}.
We also use our method to give
a very short proof of Li's bound \eqref{eq:transitivegap}
in Remark  \ref{remk:Libound}, after which we improve Li's bound to Theorem
\ref{thm:intro-Li}.
In \S \ref{sec:noncompact}, we consider noncompact
homogeneous spaces and prove Theorem \ref{thm:2ndmain}.
Although our proofs there hold in the compact case as well, we devote the preceding separate section to compact spaces since the proofs for them are significantly more straightforward.
In \S \ref{sec:spectrals},
we apply Theorem \ref{thm:2ndmain} to obtain upper estimates 
on the heat kernel. In \S \ref{sec:misc}, we compare our method to the essence of
Li's method. In \S \ref{sec:examples}, we consider the counting
functions of the three constant curvature space forms, as well as complex hyperbolic spaces. 
Appendix \ref{sec:appendix} contains some basic facts about the spectral function.

We thank the referee for a careful reading of the manuscript.


\section{The Laplacian and the spectral embedding}\label{sec:prelim}

Let $M$ be a compact, connected, smooth manifold, let $g$ be a smooth 
Riemannian metric tensor,
and let $\mu= \mu_g$ denote the associated volume measure. 
For each Borel set $E$, let $\vol(E):=\mu(E)$.
 Let $C^{\infty}(M)$ denote the space of real- or
complex-valued smooth functions on $M$. For $u, v \in C^{\infty}(M)$, 
define the $L^2$ inner product 
\[  \iprod{ u,v } ~:=~ \int_{M} u \cdot \overline{v} \, d\mu \]
and $L^2$ norm  $\|u\| := \sqrt{\iprod{ u,u}}$. Then $L^2(M)=L^2(M, \mu)$
is the completion of $C^{\infty}(M)$ with respect to this norm.  

The Riemannian gradient $\nabla= \nabla_g$ is defined implicitly by 
$g(\nabla u, X)=Xu$  for each $u \in C^\infty(M)$ and vector field $X$ on $M$. 
The Laplacian $\Delta = \Delta_g$ is the unique 
self-adjoint, closed 
operator\footnote{ 
The extension is given by the Friedrichs extension. See, for example,
Theorem X.23 in 
\cite{Reed-Simon-II}.}
on $L^2(M)$ such that for each $u, v \in C^{\infty}(M)$,
\begin{equation}   \label{eq:dirichlet}
\iprod{ \cL u, v}
  ~=~
\int g(\nabla u, \nabla \overline{v}) \,d\mu
  \,.
\end{equation}
Since $M$ is compact and $\Delta$ is elliptic, standard results 
give that $(\Delta+I)^{-1}$ is a compact operator. It follows 
that the spectrum of $\Delta$ consists of a discrete set of
eigenvalues, and each eigenspace has a finite basis of eigenfunctions. 
Standard elliptic-regularity results imply that each eigenfunction is smooth.
Since the operator is self-adjoint, the eigenspaces are mutually orthogonal,
the eigenvalues are real, and there exists an orthonormal basis 
of real-valued eigenfunctions. By (\ref{eq:dirichlet}), the 
operator is nonnegative and hence the eigenvalues are nonnegative.  

Recall that in the introduction we defined $e_{\lambda}$ to be 
the integral kernel for the projection $E[0,\lambda]$ onto 
the space $\Hcal_{\lambda} \subset L^2(M)$. 
We define $F^{\lambda} \colon M \to L^2(M)$ by $x \mapsto F^\lambda_x$,
where
\[   F^{\lambda}_x(y) ~:=~  e_{\lambda}(x,y)\,. \]
Of course, the image of $F^\lambda$ is contained in $\Hcal_\lambda$, and by \eqref{defn:integral-kernel},
if given an orthonormal basis $\Bcal_\lambda$ for $\Hcal_\lambda$, 
then the coordinates of $F_x^\lambda$ are 
$\Seq{b(x) \st b \in \Bcal_\lambda}$.

In some contexts, the function $F^{\lambda}$ 
is called a \dfn{spectral embedding} \cite{Shayan}.\footnote{We do not know for which $\lambda$
the map $x \mapsto F_x^\lambda$ is a topological embedding.} 
In what follows, we will usually 
suppress from notation the dependence of $F^{\lambda}_x$
on $\lambda$.

Because $(x,y) \mapsto F_x(y)$ is the kernel for the projection onto $\Hcal_{\lambda}$
and $F_x \in \Hcal_{\lambda}$, we have 
\begin{equation}  \label{eqn:kernel-identity}
  F_x(y) ~=~ \int_{M} F_y(z) \cdot F_x(z)\, d\mu(z)\,.
\end{equation}
In particular, 
\begin{equation}  \label{eqn:normF}
    F_x(x) ~=~  \norm{F_x}^2,
\end{equation}
and by the Cauchy--Schwarz inequality,
\begin{equation} \label{est:F}
  |F_x(y)| ~\leq~ \|F_y\| \cdot \|F_x\| ~=~ \sqrt{F_y(y) \cdot F_x(x)} \,.
\end{equation}

By definition, $\nabla F_x$ is the gradient of $e_\lambda(x,y)$
in the second variable. We will write $\nabla_x F_x$ to indicate
the gradient of $e_\lambda(x,y)$ in the first variable.  
For each vector $X$ in the tangent bundle of $M$,
let $|X| := \sqrt{g(X, X)}$. 

\begin{lem}
\label{lem:intgradbound}
For every $\lambda \ge 0$, we have
$$
\int_M \norm{|\nabla_x F_x|}^2 \,d\mu(x)~
\le~
\lambda \int_M \norm{F_x}^2 \,d\mu(x)
\,.
$$ 
\end{lem}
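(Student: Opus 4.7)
The plan is to expand $F_x$ in an orthonormal basis of eigenfunctions, compute both sides of the desired inequality explicitly, and then use the fact that all relevant eigenvalues lie in $[0, \lambda]$.

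First, let $\{b_i\}_{i=1}^{N(\lambda)}$ be an orthonormal basis of $\Hcal_\lambda$ consisting of real eigenfunctions, with $\Delta b_i = \lambda_i b_i$ where $\lambda_i \le \lambda$; such a basis exists by the spectral theory discussion already recalled in this section. By \eqref{defn:integral-kernel}, we have $e_\lambda(x,y) = \sum_i b_i(x) b_i(y)$, hence
\[
\nabla_x F_x(y)~=~ \sum_i (\nabla b_i)(x) \cdot b_i(y)\,,
\]
viewed as a vector in $T_x M$ depending on $y$. Taking the squared norm at $x$ and using orthonormality of $\{b_i\}$ to integrate out the $b_i(y) b_j(y)$ cross-terms in $y$, I compute
\[
\norm{|\nabla_x F_x|}^2~=~ \int_M \sum_{i,j} g\bigl((\nabla b_i)(x),(\nabla b_j)(x)\bigr)\, b_i(y) b_j(y) \,d\mu(y)~=~ \sum_i |\nabla b_i(x)|^2\,.
\]

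Next, I integrate over $x$ and apply the Dirichlet identity \eqref{eq:dirichlet} to each term, obtaining
\[
\int_M \norm{|\nabla_x F_x|}^2 \,d\mu(x)~=~ \sum_i \iprod{\Delta b_i, b_i}~=~ \sum_i \lambda_i\,.
\]
For the right-hand side, I use \eqref{eqn:normF} to get $\norm{F_x}^2 = F_x(x) = \sum_i b_i(x)^2$, and integrating over $x$ with the orthonormality of $\{b_i\}$ gives $\int_M \norm{F_x}^2 \,d\mu(x) = N(\lambda)$. Since each $\lambda_i \le \lambda$, we have $\sum_i \lambda_i \le \lambda \cdot N(\lambda)$, which is exactly the claimed inequality.

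There is no real obstacle here: the argument is a one-page computation once one notices that everything diagonalizes in the eigenbasis and the cross-terms vanish when integrating in $y$. The only mild care is to make sure that \eqref{eq:dirichlet} applies to the smooth eigenfunctions $b_i$ (which it does, since they are smooth by the elliptic regularity remark above), and that the sum over $i$ is finite, so integration and summation may be freely interchanged.
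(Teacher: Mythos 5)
Your proof is correct and follows essentially the same route as the paper's: both expand $F_x$ in a real orthonormal basis of $\Hcal_\lambda$, use orthogonality in $y$ to collapse the cross-terms and obtain $\norm{|\nabla_x F_x|}^2 = \sum_i |\nabla b_i(x)|^2$, and then integrate in $x$ via the Dirichlet identity \eqref{eq:dirichlet}. The only cosmetic difference is that you insist the basis consist of eigenfunctions and compute both sides exactly (as $\sum_i \lambda_i$ and $N(\lambda)$), whereas the paper allows any orthonormal basis of $\Hcal_\lambda$ and simply bounds each Rayleigh quotient $\int |\nabla b|^2\,d\mu/\norm{b}^2$ by $\lambda$.
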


\begin{proof}
Let $\Bcal_{\lambda}$ be a real orthonormal basis for $\Hcal_{\lambda}$.
An elementary calculation shows that
\begin{align*}
\norm{|\nabla_x F_x|}^2
&~=~
\int_M |\nabla_x F_x(y)|^2 \,d\mu(y)
\\ &~=~
\int_M
\mg\Bigl(\sum_{b \in \Bcal_\lambda} \nabla b(x) {b(y)},
\sum_{\tb \in \Bcal_\lambda} \nabla \tb(x) {\tb(y)}\Bigr) \,d\mu(y)
\\ &~=~
\sum_{b, \tb \in \Bcal_\lambda} \mg\bigl(\nabla b(x), \nabla
\tb(x)\bigr) \cdot
\iprod{b, \tb}
~=~
\sum_{b \in \Bcal_\lambda} |\nabla b(x)|^2
\,.
\end{align*}
Now integrate over $M$ and use \eqref{eq:dirichlet} to see that for each $b
\in \Bcal_\lambda$, there is $\lambda' \le \lambda$ such that $\int_M
|\nabla b(x)|^2 \,d\mu(x) = \lambda' \norm{b}^2 \le \lambda
\norm{b}^2$; summing gives the result.
\end{proof}

\section{The upper bound in the case of compact homogeneous spaces}
\label{sec:transitive}

In this section we assume that $(M,g)$ is a compact Riemannian \dfn{homogeneous space},\footnote{Each homogeneous space is smooth, and, in fact, analytic. This follows from the solution of Hilbert's Fifth problem. See, for example, \S 2.1 in \cite{Va}.}
in other words, that for each pair $(x,y) \in M \times M$, there exists a 
self-isometry $\iota$ of $(M,g)$ so that $\iota(x)=y$. 
Because the Laplacian commutes with (pull-back by) each isometry, 
so does each spectral projection. Also, the gradient commutes with isometries.
It follows that $\norm{F_x}$ and $\norm{|\nabla_x F_x|}$ do not 
depend on $x \in M$.
In particular, Lemma \ref{lem:intgradbound} yields
\begin{equation}
\label{eq:gradbound}
\norm{|\nabla_x F_x|}^2~ 
\le~
\lambda \norm{F_x}^2 
\end{equation}
for every $x \in M$.

Let $B(x,r) \subset M$ denote those points whose Riemannian distance to $x$
is less than $r$. Since $(M,g)$ is homogeneous, the volume, $V(r)$, of $B(x,r)$ 
does not depend on $x \in M$.

\begin{thm}
\label{thm:transitiveeig}
If $(M,g)$ is a connected, compact, Riemannian homogeneous space, 
then for each $\alpha\in (0,\pi/2)$ and each $\lambda > 0$, 
\begin{equation}
\label{eq:transitiveeig}
\ct(\lambda)~ \le~
\frac{\vol(M)}{\int_0^{\pi/2} V(\theta/\sqrt \lambda) \sin(2\theta)
\,d\theta}
~\leq~ \frac{\vol(M)}{\cos^2 \alpha\cdot
V(\lambda^{-1/2} \cdot \alpha)}\,.
\end{equation}
\end{thm}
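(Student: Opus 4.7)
The plan is to analyze the spectral embedding $F^\lambda\colon M\to L^2(M)$ as sketched in the introduction. Because $(M,g)$ is homogeneous and both $\Delta_g$ and $\nabla$ commute with isometries, the functions $x\mapsto\|F_x\|$ and $x\mapsto\||\nabla_x F_x|\|$ are constant; call the first $\rho=\rho_\lambda$. Combined with Lemma~\ref{lem:intgradbound}, this gives the pointwise gradient bound $\||\nabla_x F_x|\|\le\sqrt\lambda\cdot\rho$ of \eqref{eq:gradbound}. By \eqref{eq:int-spec}, it suffices to prove $\rho^2=e_\lambda(x,x)\le\bigl(\int_0^{\pi/2}V(\theta/\sqrt\lambda)\sin 2\theta\,d\theta\bigr)^{-1}$.

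Next I would convert the gradient bound into a Lipschitz angle estimate. The map $F$ takes values in the sphere of radius $\rho$ in $L^2(M)$, so for any smooth curve $\gamma$ of length $\ell$ from $x$ to $y$, the chain rule and \eqref{eq:gradbound} give that the curve $F\circ\gamma$ on this sphere has length at most $\sqrt\lambda\cdot\rho\cdot\ell$. Choosing $\gamma$ to be a minimizing geodesic and noting that the geodesic (great-circle) distance on a radius-$\rho$ sphere is $\rho\cdot\psi$, where $\psi$ is the angle at the origin, one obtains
\[
\psi_\lambda(x,y)~\le~\sqrt\lambda\cdot\dist(x,y),
\]
where $\cos\psi_\lambda(x,y)=\iprod{F_x,F_y}/\rho^2$. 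Applying the reproducing identity \eqref{eqn:kernel-identity} and \eqref{eqn:normF}, this says $e_\lambda(x,y)=\rho^2\cos\psi_\lambda(x,y)$.

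Now for the key step. Since $F_x\in\Hcal_\lambda$, the reproducing property gives the $L^2$ identity $\int_M e_\lambda(x,y)^2\,d\mu(y)=\|F_x\|^2=\rho^2$. Substituting $e_\lambda(x,y)=\rho^2\cos\psi_\lambda(x,y)$ yields
\[
1~=~\rho^2\int_M\cos^2\psi_\lambda(x,y)\,d\mu(y).
\]
On the ball $B(x,R)$ with $R=\pi/(2\sqrt\lambda)$, the angle bound combined with the monotonicity of $\cos^2$ on $[0,\pi/2]$ gives $\cos^2\psi_\lambda(x,y)\ge\cos^2(\sqrt\lambda\cdot\dist(x,y))$. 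I would then use the layer-cake / integration-by-parts identity $\int_{B(x,R)}h(\dist(x,y))\,d\mu(y)=h(R)V(R)-\int_0^R V(r)h'(r)\,dr$ with $h(r)=\cos^2(\sqrt\lambda r)$, which vanishes at $r=R$, to rewrite the integral as
\[
\int_0^{\pi/(2\sqrt\lambda)}V(r)\sqrt\lambda\sin(2\sqrt\lambda r)\,dr~=~\int_0^{\pi/2}V(\theta/\sqrt\lambda)\sin(2\theta)\,d\theta
\]
after the substitution $\theta=\sqrt\lambda r$. Combining with the displayed identity above produces the first inequality of \eqref{eq:transitiveeig}. The second, weaker inequality then follows by discarding $[0,\alpha]$ from the range of integration and using monotonicity of $V$, since $\int_\alpha^{\pi/2}\sin 2\theta\,d\theta=\cos^2\alpha$.

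The main conceptual step is the conversion of the gradient bound into an angular Lipschitz estimate on the sphere in $L^2(M)$; the rest is a clean layer-cake computation. The only technical points I anticipate are justifying the chain-rule computation for $F\circ\gamma$ (routine given smoothness of $e_\lambda$, which we have) and the use of the reproducing identity to turn an $L^2$ mass constraint into an integral inequality for $\rho$.
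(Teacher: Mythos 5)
Your proposal is correct and follows essentially the same route as the paper's proof: homogeneity makes $\|F_x\|=\rho$ constant, the $L^2$ gradient bound from Lemma~\ref{lem:intgradbound} becomes a pointwise bound, this gives the angular Lipschitz estimate $\psi_\lambda(x,y)\le\sqrt\lambda\,\dist(x,y)$, and the reproducing identity $1=\rho^2\int_M\cos^2\psi_\lambda\,d\mu$ closes the argument. The paper phrases the radial reduction via the coarea formula followed by integration by parts, whereas you phrase it as a layer-cake identity, and for the second inequality you use monotonicity of $V$ on the reduced integral rather than bounding $\cos^2$ on a truncated range as the paper does; both are the same computation in slightly different clothing.
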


If, for example, $(M,g)$ is the circle $\Rbb/\Zbb$, then 
the first inequality in (\ref{eq:transitiveeig}) 
gives $\ct(\lambda) \le \lfloor 2 \sqrt \lambda/\pi \rfloor$ for $\lambda \ge \pi^2$.
For comparison, the exact value of $\ct(\lambda)$ is 
$2\lfloor\sqrt\lambda/(2\pi) \rfloor + 1$ for $\lambda \ge 0$.

\begin{proof}
Fix $\lambda > 0$.
Since $(M,g)$ is homogeneous,  $x \mapsto \norm{F_x}$ is a constant function.
That is, there exists $\rho>0$ such that $F$ maps $M$ into the sphere 
of radius $\rho$ in $L^2(M)$. Thus, since  $\ct(\lambda)= \rho^2 \cdot \vol(M)$ 
as described in \eqref{eq:int-spec}, it suffices to bound $\rho^{-2}$ from below.

Given $x, y \in M$, choose a path $\gamma$ from $x$ to $y$ of length
$\dist(x, y)$. Using  \eqref{eq:gradbound} and the chain rule, we find 
that $F \circ \gamma$ has 
length at most $\sqrt\lambda \cdot \rho \cdot \dist(x, y)$. 
The distance between two points in the sphere of radius $\rho$
equals $\rho \cdot \psi$, where $\psi$ is the angle between the two points.
In particular, if $\psi(x,y)$ is the angle between $F_x$ and $F_y$,
then $\psi(x,y) \leq \sqrt{\lambda} \cdot \dist(x,y)$. 
Therefore, if $\dist(x, y) \le r_\lambda := \pi/(2 \sqrt{\lambda})$, then 
\begin{equation}
\label{est:cosine-angle}
\cos \psi(x, y)~ 
\ge~ 
\cos \left(\sqrt \lambda \cdot \dist(x,y)\right)~
\ge~
0\,.
\end{equation}

On the other hand, $\cos\, \psi(x,y)= \langle F_x, F_y \rangle/ \rho^2$.
Therefore, using (\ref{eqn:kernel-identity}) we find that 
\begin{equation}
\label{eq:value-rho}
\rho^2~ 
=~
 \norm{F_x}^2~ 
=~ 
\int_M \iprod{F_x,F_y}^2 \,d\mu(y)~ 
=~ \rho^4 \int_M \cos^2 \psi(x, y) \,d\mu(y)
\,.
\end{equation}
By combining this with 
(\ref{est:cosine-angle}), 
we find that
\begin{eqnarray}\label{eq:lowerbound}
\nonumber
\rho^{-2}
&\ge&
\int_{y \in B(x, r_\lambda)} \cos^2 \bigl(\sqrt \lambda \cdot \dist(x,
y)\bigr) \,d\mu(y) 
\\
&=&
 \int_0^{r_\lambda}\, |\partial B(x, r)|
\cdot \cos^2 (\sqrt \lambda r)
\,dr
\,,
\end{eqnarray}
where the latter equality is due to the coarea formula and $|\,\cdot\,|$ denotes hypersurface area.
A change of variable and
integration by parts show that the right-hand side
equals
$\int_0^{\pi/2} V(\theta/\sqrt \lambda) \cdot \sin(2\theta) \,d\theta$. The first inequality in  \eqref{eq:transitiveeig} follows.

The second inequality of \eqref{eq:transitiveeig} is obtained by
integrating in \eqref{eq:lowerbound} only from $r = 0$ to
$r = \alpha/\sqrt\lambda$ and bounding $\cos^2 (\sqrt\lambda r)$ from 
below by $\cos^2 \alpha$.
\end{proof}

\begin{remk} \label{remk:subspace}
If one were to replace $\Hcal_{\lambda}$ with an isometry-invariant
subspace $H \subseteq \Hcal_\lambda$
and $F^{\lambda}$ with the spectral embedding $F^H$ associated 
to the projection onto $H$, then all of the above analysis would apply
to $F_x^H$ with $\ct(\lambda)$ replaced by 
\[  \dim(H) ~=~ \int_M \left\|F_x^H\right\|^2\, d\mu(x)\,. \]  
\end{remk}

\begin{remk} \label{remk:Libound}
The estimate of the first nonzero eigenvalue in \cite{Li} described in (\ref{eq:transitivegap})
%
%
%
%
does not follow directly from Theorem \ref{thm:transitiveeig}. 
However, this estimate follows easily from the analysis 
leading to Theorem \ref{thm:transitiveeig}.
Indeed, let $H$ be the $\lambda_1$-eigenspace.
We have $\cos \psi^H(x,y)= \langle F^{H}_x, F^H_y \rangle / \rho^2$, where 
$\psi^H(x,y)$ is the angle between $F^{H}_x$ and  $F^H_y$.
Since $F^H_x$ is orthogonal
to the constants, there exists $y$ so that $F_x^H(y) < 0$.   
Therefore $\dist(x,y) \cdot \sqrt{\lambda} \ge \psi^H(x, y) > \pi/2$
and (\ref{eq:transitivegap}) follows.  
\end{remk}

We now improve Li's bound.

\begin{thm} \label{thm:Li-improved}
If $(M,g)$ is a connected, compact, Riemannian homogeneous space, then
\begin{equation} \label{est:Li-improved}
\sqrt{\lambda_1}~
>~
\frac{\pi}{2D}~ +~
\frac{1}{D}
\cdot
\arcsin \left( \frac{V(D/2)}{2 \bigl(\vol M - V(D/2)\bigr)} \right)\,,
\end{equation}
where $D$ is the diameter of $(M,g)$.
\end{thm}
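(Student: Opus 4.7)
The plan is to use the spectral embedding $F^H$ associated to the $\lambda_1$-eigenspace $H$, as set up in Remark \ref{remk:Libound}. By homogeneity, $\rho := \|F^H_x\|$ is constant in $x$, and $\cos \psi^H(x,y) = F^H_x(y)/\rho^2$, where $\psi^H(x,y)$ is the angle between $F^H_x$ and $F^H_y$; the gradient/arc-length argument gives $\psi^H(x,y) \le \sqrt{\lambda_1}\, \dist(x,y)$. The crucial new input beyond Remark \ref{remk:Libound} is that $H$ is orthogonal to the constants (since $\lambda_1 > 0$), which yields
\[
\int_M \cos \psi^H(x,y)\, d\mu(y) = 0.
\]

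The first ingredient is the elementary volume bound $V(D/2) \le \vol(M)/2$: choosing $x, y_0 \in M$ with $\dist(x, y_0) = D$, the open balls $B(x, D/2)$ and $B(y_0, D/2)$ are disjoint by the triangle inequality, and each has volume $V(D/2)$ by homogeneity. Hence $A := V(D/2)/\bigl(\vol M - V(D/2)\bigr) \le 1$, so $\arcsin(A/2) \le \pi/6$; in particular, if $\beta := \sqrt{\lambda_1}\, D \ge \pi$ then the claim is trivial, so we may assume $\beta < \pi$. The second ingredient is the integration: using $\cos \psi^H(x,y) \ge \cos(\sqrt{\lambda_1}\, \dist(x,y))$, the coarea formula, and integration by parts (as in the proof of Theorem \ref{thm:transitiveeig}), I obtain
\[
0 \ge \int_0^D |\partial B(x,r)|\, \cos(\sqrt{\lambda_1}\, r)\,dr = \vol(M) \cos \beta + \sqrt{\lambda_1} \int_0^D V(r) \sin(\sqrt{\lambda_1}\, r)\,dr.
\]
Since $V$ is nondecreasing and $\sin(\sqrt{\lambda_1}\, r) \ge 0$ on $[0,D]$, bounding $V(r) \ge V(D/2)$ on $[D/2, D]$ and discarding the strictly positive piece $\int_0^{D/2} V(r) \sin(\sqrt{\lambda_1}\, r)\, dr > 0$ gives the strict inequality $-\cos \beta > A \cos(\beta/2)$.

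A short case split then finishes the argument. If $\beta \ge 2\pi/3$, then $\beta \ge 2\pi/3 \ge \pi/2 + \arcsin(A/2)$, where strictness in the borderline case $\beta = 2\pi/3$ is forced because it would require $A = 1$, contradicting $-\cos \beta > A \cos(\beta/2)$ at $\beta = 2\pi/3$. If $\beta < 2\pi/3$, then $\cos(\beta/2) \ge 1/2$, so $-\cos \beta > A/2$; since Li's bound gives $\beta - \pi/2 \in (0, \pi/6) \subset (0, \pi/2)$, applying $\arcsin$ to the identity $\sin(\beta - \pi/2) = -\cos\beta > A/2$ yields $\beta > \pi/2 + \arcsin(A/2)$. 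The main conceptual step is the integration identity combined with the trivial but decisive observation $V(D/2) \le \vol(M)/2$; the remaining case split is routine trigonometry.
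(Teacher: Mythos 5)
Your proof is correct, and it takes a mildly different route than the paper's through the same starting inequality. Both arguments reduce, via orthogonality of $F^H_x$ to constants, to $0 \ge \int_0^D |\partial B(x,r)| \cos(\sqrt{\lambda_1}\, r)\,dr$, and both use the disjoint-balls observation $V(D/2) \le \vol(M)/2$ (so that $A := V(D/2)/(\vol M - V(D/2)) \le 1$ and the claimed right-hand side is at most $2\pi/3D$) to dispose of large $\beta := \sqrt{\lambda_1}D$. The difference is in how the integral is processed: the paper sets $f(r) = \cos(\sqrt{\lambda_1} r)$, splits at $r_0 = \pi/(2\sqrt{\lambda_1})$ and $\tfrac23 r_0$, and bounds $f$ (respectively $|f|$) by its extreme values on $[0, \tfrac23 r_0]$ and $[r_0, D]$, arriving at $|\cos \beta| \cdot (\vol M - V(D/2)) > \tfrac12 V(D/2)$; you instead integrate by parts against $V$, bound $V(r) \ge V(D/2)$ on $[D/2, D]$, and drop the strictly positive contribution from $[0, D/2]$, which yields the (formally stronger and manifestly strict) intermediate inequality $-\cos\beta > A\cos(\beta/2)$, which you then relax to $-\cos\beta > A/2$ via $\cos(\beta/2) \ge 1/2$ when $\beta \le 2\pi/3$. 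Both then finish with the same elementary $\arcsin$ step. Your integration-by-parts version is somewhat cleaner, makes the strictness transparent, and if you retained the $\cos(\beta/2)$ factor it would actually give a marginally sharper lower bound than the one stated; as written the two proofs establish the identical statement.
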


\begin{proof}
We first remark that (\ref{est:Li-improved}) is true if 
$\sqrt{\lambda_1} \cdot D > 2\pi/3$.
Indeed, if $\dist(x,y)=D$, then the open balls of radius $D/2$ 
centered respectively at $x$ 
and $y$ are disjoint, and hence $2 \cdot V(D/2)\leq \vol(M)$. 
Thus, $V(D/2)\big/\bigl(\vol(M)-V(D/2)\bigr) \leq 1$, 
and so the expression 
on the right side of (\ref{est:Li-improved}) is bounded 
above by $2 \pi/3D$.
Thus, we assume for the remainder of the proof that
$\sqrt{\lambda_1} \cdot D \le 2\pi/3$.

Let $H$ and $F^H$ be as in Remark \ref{remk:Libound}.
Since $F^H_x$ is orthogonal to the constants, we find from 
(\ref{est:cosine-angle}) and the coarea formula that
\begin{equation} 
\label{eq:intcos}
0~
=~
\int_M \frac{F^H_x(y)}{F^H_x(x)} \,d\mu(y)~
\ge~
\int_0^D |\partial B(x, r)|\, \cos(\sqrt\lambda_1 r) \,dr \, .
\end{equation}

Let $f(r) := \cos(\sqrt{\lambda_1} \cdot r)$. Now
$f(r)$ is a decreasing function
on $[0,D]$. In particular, from (\ref{eq:intcos}) we see that
$f$ has a unique zero $r_0 := \pi/2 \sqrt{\lambda_1}$ on $[0,D]$.
Hence we find from (\ref{eq:intcos}) that
\[   
\int_{r_0}^{D} \left|\partial B(0,r) \right| \cdot |f(r)|\, dr~
\geq~ 
\int_{0}^{\frac{2}{3} \cdot r_0} 
|\partial B(0,r)| \cdot f(r) \, dr 
\,.   
\]
Therefore, since the minimum value of $f$ over $[0, \frac{2}{3} r_0]$ is 
$1/2$ and the maximum value of $|f|$ over $[r_0, D]$ is 
$|\!\cos(\sqrt{\lambda_1} D)|$, we have 
\begin{equation}
\label{est:post-split}  
\left|\cos\bigl(\sqrt{\lambda_1} D \bigr) \right| \cdot \bigl( \vol(M)- V(r_0) \bigr)~
>~  \frac{1}{2}  \cdot V \left( \frac{2}{3} \cdot r_0 \right) 
\,.  
\end{equation}
Since, by assumption, $\pi/2 < \sqrt{\lambda_1} \cdot D \leq \pi$, we have 
$\left|\cos\bigl(\sqrt{\lambda_1} D \bigr) \right| = 
\sin(\sqrt{\lambda_1} D - \pi/2)$. Since
we have assumed that 
$\frac{2}{3} r_0 \le D/2$, it follows from (\ref{est:post-split}) that
\[
\sin\left(\sqrt{\lambda_1} \cdot D\, -\, \frac{\pi}{2} \right)~
\geq~ 
\frac{1}{2}  \cdot
\frac{V \left( D/2 \right)}{\bigl( \vol(M)- V(D/2) \bigr)}\, ,
\]
and the claim follows. 
\end{proof}

\begin{remk} \label{remk:antipode}
This method also shows immediately that for
any round sphere,
we have
$\sqrt{\lambda_1} \ge \pi/D$:
If $\sqrt{\lambda_1} \le \pi/D$,
then the inequality \eqref{eq:intcos}
holds. But since 
$|\partial B(x, r)| = |\partial B\bigl(y, D - r\bigr)| = |\partial B(x, D - r)|$ when $\dist(x, y) = D$, this is easily seen to be impossible if
$\sqrt{\lambda_1} < \pi/D$.
In the case of the circle, this is sharp since
$\sqrt{\lambda_1} = \pi/D$.
\end{remk}


\section{General homogeneous spaces} \label{sec:noncompact}

In this section, we extend our analysis to 
noncompact manifolds, $M$. 
As before, the Laplacian associated to a Riemannian metric $g$
on $M$ is defined implicitly by 
\begin{equation}   \label{eq:dirichlet2}
\iprod{ \cL u, v}
  ~=~
\int_M g(\nabla u, \nabla \overline{v}) \,d\mu
\end{equation}
where, at first, the functions $u,v$ are smooth and compactly supported. 
If the metric $g$ is complete, then $\Delta$ has a unique extension to an 
unbounded self-adjoint operator on $\Hcal = L^2(M, \mu_g)$, which we continue 
to denote by $\Delta$.\footnote{See for example \cite{Gaffney} or \cite{Strichartz}.} The domain, $\dom(\Delta)$, 
of this operator can be characterized using the method of 
Friedrichs.\footnote{See, for example, Theorem  X.23 in \cite{Reed-Simon-II}.}
In particular, the sesquilinear form 
$q(u,v):= \langle \Delta u, v\rangle + \langle u, v\rangle$ 
on $C^{\infty}_0(M)$ has a unique closed extension, still denoted 
$q$, with domain $\qdom$. Let $\qdom'$ denote the linear functionals 
on $\Hcal$ that are bounded with respect to the norm $\| \cdot \|_q$ 
associated to $q$. Since $\|\cdot \| \leq \|\cdot\|_q$, we have an 
induced  sequence of embeddings 
\[  \qdom \stackrel{i}{\longrightarrow} \Hcal \stackrel{j}{\longrightarrow}  \qdom'. \]
The Riesz representation theorem defines an isometric isomorphism 
$B\colon \qdom \to \qdom'$ such that $(Bu)(v)=q(v,\overline u)$ for each $v \in \qdom$. 
The domain $\dom(\Delta)$ is
by definition equal to $i\bigl(B^{-1}(j(\Hcal))\bigr)$ and 
$\Delta := j^{-1}\circ B \circ i^{-1} - I$.

In general, the spectrum of $\Delta$ is not discrete.
Since $\Delta$ is self-adjoint and nonnegative, 
the spectral theorem provides a projection-valued measure $E$ on $\Rbb$ 
such that $\Delta = \int_{0}^{\infty} \nu \, dE_{\nu}$. Let $E_{\lambda} :=
E\bigl([0, \lambda]\bigr)$ and denote by $\Hcal_\lambda$ the range of $E_\lambda$.
The space $\Hcal_{\lambda}$
consists of smooth functions, and the operator $E_\lambda$ has a smooth, 
symmetric integral kernel $e_\lambda$ that is sometimes
called the \dfn{spectral function}. (See Theorem \ref{lem:kernel} 
in the Appendix.)
As before, we will set $F^\lambda_x(y) := e_{\lambda}(x,y)$, often 
suppress the superscript $\lambda$, and regard
$x \mapsto F_x$ as a mapping from $M$ to $L^2(M)$.

Using the spectral function, one defines
a local version of the spectral counting function. 

\begin{defn} 

For $\lambda>0$, define 
\begin{equation} \label{defn:count-noncompact}
 \ct_x(\lambda)~ :=~  \left\| F^{\lambda}_x\right\|^2~ =~ e_\lambda(x, x)\,. 
\end{equation}
\end{defn}

\begin{remk}
If $M$ is compact, then $\int_M \ct_x(\lambda)\, d\mu(x) = \ct(\lambda)$.  
\end{remk}

The remainder of this section is devoted to generalizing 
Theorem \ref{thm:transitiveeig}. Our first goal 
is the generalization of estimate (\ref{eq:gradbound}). 
We begin with the following facts concerning a symmetry of the spectral function.

\begin{prop} \label{prop:F-properties}
If $f \colon [0, \infty) \to \R$ is a Borel, locally bounded
function, then  
for each $(x, y) \in M \times M$ and $\lambda >0$, 
we have $f(\Delta)F_x \in \Hcal_{\lambda}$ and $(f(\Delta) F_x)(y) = (f(\Delta) F_y)(x)$. 
\end{prop}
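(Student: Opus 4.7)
The plan is to proceed in three steps: first establish that $F_x^\lambda$ itself lies in $\Hcal_\lambda$; second deduce that $f(\Delta) F_x^\lambda$ remains in $\Hcal_\lambda$ by appealing to the commutation of the functional calculus with spectral projections; and third obtain the symmetry identity from a reproducing-kernel formula combined with self-adjointness of $f(\Delta)$.

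For the first step, I would invoke Theorem~\ref{lem:kernel} of the appendix to get that $e_\lambda$ is a smooth, symmetric, real-valued integral kernel for $E_\lambda$. The projection identity $E_\lambda^2 = E_\lambda$ translates on the kernel side into
\[
\int_M e_\lambda(y,z)\, e_\lambda(x,z)\, d\mu(z) ~=~ e_\lambda(x,y)\,,
\]
which is exactly $(E_\lambda F_x^\lambda)(y) = F_x^\lambda(y)$; hence $F_x^\lambda \in \Hcal_\lambda$. Since $\Delta$ acts as a bounded operator (with norm at most $\lambda$) on $\Hcal_\lambda$, and since the locally bounded $f$ is in particular bounded on $[0,\lambda]$, we have $F_x^\lambda \in \dom\bigl(f(\Delta)\bigr)$ and the restriction of $f(\Delta)$ to $\Hcal_\lambda$ is bounded. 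The standard commutation $E_\lambda f(\Delta) = f(\Delta) E_\lambda$ on $\dom(f(\Delta))$ then gives $E_\lambda f(\Delta) F_x^\lambda = f(\Delta) E_\lambda F_x^\lambda = f(\Delta) F_x^\lambda$, so $f(\Delta) F_x^\lambda \in \Hcal_\lambda$.

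For the symmetry identity, the key observation is the reproducing property: for every $u \in \Hcal_\lambda$ and every $y \in M$,
\[
u(y) ~=~ (E_\lambda u)(y) ~=~ \int_M e_\lambda(y,z)\, u(z)\, d\mu(z) ~=~ \iprod{u, F_y^\lambda}\,,
\]
where the last equality uses that $F_y^\lambda$ is real-valued so that $\overline{F_y^\lambda} = F_y^\lambda$. Applying this to $u = f(\Delta) F_x^\lambda \in \Hcal_\lambda$ yields $(f(\Delta) F_x^\lambda)(y) = \iprod{f(\Delta) F_x^\lambda, F_y^\lambda}$. Because $f$ is real-valued and $\Delta$ is self-adjoint, $f(\Delta)$ is self-adjoint; and because $\Delta$ is a real operator (it commutes with complex conjugation), $f(\Delta)F_y^\lambda$ is real-valued as well. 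Therefore
\[
\iprod{f(\Delta) F_x^\lambda, F_y^\lambda} ~=~ \iprod{F_x^\lambda, f(\Delta) F_y^\lambda} ~=~ \iprod{f(\Delta) F_y^\lambda, F_x^\lambda} ~=~ (f(\Delta) F_y^\lambda)(x)\,,
\]
where the final step is the reproducing formula applied to $f(\Delta) F_y^\lambda \in \Hcal_\lambda$.

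The only points requiring care are bookkeeping in nature: local boundedness of $f$ (rather than global boundedness) is enough because $F_x^\lambda \in \Hcal_\lambda$ reduces every spectral integral to the interval $[0,\lambda]$, where $f$ is bounded; and the reality manipulations rest on $e_\lambda$ being real and symmetric together with $f$ being real-valued. There is no serious analytic obstacle here — the proposition is essentially a packaging of standard properties of the spectral function.
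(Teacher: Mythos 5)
Your proof is correct and follows essentially the same route as the paper's: the reproducing property $u(y) = \iprod{u, F_y^\lambda}$ for $u \in \Hcal_\lambda$ combined with self-adjointness of $f(\Delta)$. You simply spell out a few steps the paper leaves implicit, namely the verification that $F_x^\lambda \in \Hcal_\lambda$ via the kernel form of $E_\lambda^2 = E_\lambda$, and the observation that $f(\Delta)F_y^\lambda$ is real-valued so the final inner-product swap is legitimate.
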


\begin{proof}
If $h \in \Hcal_\lambda$, then $h(y) = \iprod{h, F_y}$ and
$f(\Delta)h = \int_0^\lambda f(\nu) \,dE_\nu (h) \in \Hcal_\lambda$. In
particular, since $F_x \in \Hcal_\lambda$,  we have
$f(\Delta) F_x \in \Hcal_{\lambda}$ and
\[
(f(\Delta) F_x)(y) 
~=~
\iprod{f(\Delta) F_x, F_y}
~=~
\iprod{F_x, f(\Delta) F_y}
~=~
(f(\Delta) F_y)(x) 
\]
since $f(\Delta)$ is symmetric.
%
%
\end{proof}

\begin{coro} \label{coro:Lap-integrable}
For each $(x,y) \in M \times M$, we have $\Delta_x F_x(y) = \Delta_y F_x(y)$.
In particular, $y \mapsto \Delta_x F_x(y)$ is square-integrable with respect to $d \mu(y)$.
\end{coro}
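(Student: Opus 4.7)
The plan is to deduce this corollary directly from the preceding Proposition \ref{prop:F-properties} by choosing $f(\nu) = \nu$, which is certainly Borel and locally bounded on $[0,\infty)$. Applied with this $f$, the proposition gives two things at once: first, that $\Delta F_x \in \Hcal_\lambda$ (in particular, $F_x$ lies in $\dom(\Delta)$, which is automatic because $F_x \in \Hcal_\lambda$ has spectral support in the bounded interval $[0,\lambda]$), and second, that the pointwise identity $(\Delta F_x)(y) = (\Delta F_y)(x)$ holds for every $(x,y) \in M \times M$.

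Next I would unfold the notation on each side. By convention, $(\Delta F_x)(y)$ means the Laplacian applied to the function $z \mapsto F_x(z) = e_\lambda(x,z)$, evaluated at $z = y$; this is exactly $\Delta_y F_x(y)$. On the other hand, $(\Delta F_y)(x)$ is the Laplacian applied to $z \mapsto F_y(z) = e_\lambda(y,z)$, evaluated at $z = x$. Using the symmetry $e_\lambda(y,z) = e_\lambda(z,y)$ of the spectral function (recorded in Theorem \ref{lem:kernel} of the appendix), this is the same as differentiating $z \mapsto e_\lambda(z,y) = F_z(y)$ in $z$ and evaluating at $z = x$, which is $\Delta_x F_x(y)$ in the notation introduced just before the statement. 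Combining the two identifications with the pointwise equality yields $\Delta_y F_x(y) = \Delta_x F_x(y)$.

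The square-integrability assertion is then immediate: since Proposition \ref{prop:F-properties} also gives $\Delta F_x \in \Hcal_\lambda \subset L^2(M,\mu)$, the smooth representative $y \mapsto \Delta_y F_x(y) = \Delta_x F_x(y)$ lies in $L^2(M,\mu)$.

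The one delicate point worth flagging is the identification of the abstract spectral-theoretic action of $\Delta = \int_0^\infty \nu \, dE_\nu$ on $F_x$ with the classical differential Laplacian applied to $F_x$ viewed as a smooth function. This relies on the smoothness of every element of $\Hcal_\lambda$ established in the appendix, together with the fact that the Friedrichs extension agrees with the classical operator on smooth functions in its domain; once this pointwise identification is granted, the whole argument collapses into the notational unwinding above, so I do not expect a substantial obstacle beyond that.
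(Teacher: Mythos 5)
Your proof is correct and takes essentially the same route as the paper: both arguments combine Proposition \ref{prop:F-properties} (with $f(\nu)=\nu$) with the symmetry of the kernel $e_\lambda$, differing only in the order in which the two ingredients are applied (the paper uses symmetry first to pass from $\Delta_x F_x(y)$ to $\Delta_x F_y(x)$, then invokes the proposition; you invoke the proposition first and then unwind with symmetry). The point you flag about identifying the spectral-theoretic $\Delta$ with the classical differential operator on the smooth representative is a legitimate subtlety that the paper also relies on implicitly.
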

\begin{proof}
Because the kernel $e_{\lambda}$ is symmetric, 
we have $\Delta_x F_x(y) = \Delta_x e_{\lambda}(x,y) \allowbreak = \Delta_x e_{\lambda}(y,x)\allowbreak = \Delta_x F_y(x)$.
By Proposition \ref{prop:F-properties}, we have $\Delta_x F_y(x)= \Delta_y F_x(y)$,
and $\Delta F_x$ is square-integrable. 
\end{proof}

\begin{lem} \label{lem:e-invariance}
Let $(M,g)$ be a complete Riemannian manifold.
If $\iota\colon M \to M$ is an isometry of the Riemannian metric $g$,
then  for each $(x,y) \in M \times M$, we have 
$e_{\lambda}\bigl(\iota(x), \iota(y)\bigr)= e_{\lambda}(x,y)$. 
\end{lem}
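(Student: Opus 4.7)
The plan is to use the fact that isometries intertwine the Laplacian with itself, pass this through the spectral calculus to the projection $E_\lambda$, and then compare integral kernels.

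First I would introduce the unitary pullback operator. Since $\iota$ preserves the Riemannian metric, it preserves the volume measure $\mu$, so the map $U\colon L^2(M,\mu)\to L^2(M,\mu)$ defined by $(Uf)(x) := f(\iota^{-1}(x))$ is unitary. A standard calculation (invariance of $\nabla$ and $\mu$ under $\iota$) shows that for all compactly supported smooth $u, v$,
\[
q(Uu, Uv) ~=~ q(u, v)\,,
\]
where $q$ is the form defining $\Delta$ via Friedrichs. Consequently $U$ preserves $\qdom$ and $\dom(\Delta)$, and $U\Delta = \Delta U$ on $\dom(\Delta)$.

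Next I would apply the functional calculus. Since $U$ is unitary and commutes with $\Delta$, it commutes with the spectral resolution $E$; in particular $U E_\lambda = E_\lambda U$, equivalently $U E_\lambda U^{-1} = E_\lambda$.

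The final step is to read off the consequence for the kernel $e_\lambda$. For any $f \in L^2(M, \mu)$ and any $x \in M$, writing out $(U E_\lambda U^{-1} f)(x)$ and then changing variables $z = \iota(y)$ (using $\mu$-invariance of $\iota$) gives
\[
(U E_\lambda U^{-1} f)(x) ~=~ \int_M e_\lambda\bigl(\iota^{-1}(x), \iota^{-1}(z)\bigr) f(z)\,d\mu(z)\,.
\]
Comparing this with $(E_\lambda f)(x) = \int_M e_\lambda(x, z) f(z)\,d\mu(z)$ and invoking uniqueness of the (continuous, hence smooth by Theorem \ref{lem:kernel}) integral kernel, I conclude $e_\lambda\bigl(\iota^{-1}(x), \iota^{-1}(z)\bigr) = e_\lambda(x, z)$ for all $x, z \in M$. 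Applying this identity to the points $\iota(x)$ and $\iota(y)$ yields the claim.

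The only delicate point is justifying $U\Delta = \Delta U$ on the correct domain; once that is in hand the rest is a straightforward application of functional calculus and a change of variables. I expect the commutation to be painless thanks to the Friedrichs characterization of $\Delta$ recalled at the start of the section, since $U$ manifestly preserves the quadratic form $q$.
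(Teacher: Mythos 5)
Your proof is correct and follows essentially the same route as the paper's: define the unitary pullback operator, observe that it commutes with $\Delta$ (and hence with $E_\lambda$), then compare integral kernels using smoothness and a change of variables. You spell out the commutation step via the Friedrichs quadratic form a bit more explicitly than the paper, which simply asserts it, but the overall structure and the key observations are the same.
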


\begin{proof}
Let $\iota^*\colon \Hcal \to \Hcal$ denote the operator defined by $\bigl(\iota^*(u)\bigr)(x) = u\bigl(\iota(x)\bigr)$. 
Since $\iota$ is an isometry, $\iota^*$ commutes with $\Delta$, and hence it commutes
with $E_{\lambda}$ for each $\lambda \in \Rbb$.   Thus, for each 
smooth, compactly supported $u\colon M \to \Rbb$, we have 
\begin{eqnarray*}
\int_M u(y) e\bigl(\iota(x), y\bigr)\, d\mu(y)  & = &  \langle u, F_{\iota(x)} \rangle \\[-4pt]
& = &  \iota^*(E_{\lambda}(u))(x) \\
& = &  E_{\lambda}(\iota^*(u))(x) \\
& = &  \langle \iota^*(u) , F_x \rangle \\
& = &  \int_M u\bigl(\iota(y)\bigr) \cdot e(x,y)\, d \mu(y)  \\
& = &  \int_M u(z) \cdot e \left(x,\iota^{-1}(z) \right) d \mu(z)\,, \\
\end{eqnarray*}
where in the the last equality we used the fact that isometries
preserve the Riemannian measure $\mu$. 
Since $u$ is an arbitrary compactly supported function, 
we have $e\bigl(\iota(x), y\bigr)= e\bigl(x, \iota^{-1}(y)\bigr)$, and the claim follows.
\end{proof}

Since each homogeneous space is complete, we deduce the following. 

\begin{coro} \label{coro:diagonal-constant}
If $(M,g)$ is a connected, Riemannian homogeneous space, 
then $x \mapsto F_x(x)=e_{\lambda}(x,x)$ is constant.\qed
\end{coro}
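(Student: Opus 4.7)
The plan is to obtain the corollary as an immediate consequence of Lemma \ref{lem:e-invariance} together with the transitivity hypothesis, with only the mild preliminary observation that homogeneous spaces are complete (needed to apply Lemma \ref{lem:e-invariance}, whose standing assumption is completeness). The text already states this fact in the sentence just preceding the corollary, so I would simply invoke it; the standard justification is that the transitive action of the isometry group allows any geodesic approaching the "edge" of its domain of definition to be translated by an isometry to a region where local extension of geodesics is available, contradicting incompleteness.

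Given completeness, Lemma \ref{lem:e-invariance} applies and states that $e_\lambda\bigl(\iota(x),\iota(y)\bigr) = e_\lambda(x,y)$ for any self-isometry $\iota$ of $(M,g)$ and any $x,y \in M$. Fix an arbitrary pair of points $x_0, x_1 \in M$. By the definition of homogeneous space recalled at the beginning of \S \ref{sec:transitive} (and the fact that the same definition is in force throughout \S \ref{sec:noncompact}), there exists a self-isometry $\iota$ of $(M,g)$ with $\iota(x_0)=x_1$. Applying Lemma \ref{lem:e-invariance} with $y=x_0$ yields
\[
e_\lambda(x_1,x_1)~=~e_\lambda\bigl(\iota(x_0),\iota(x_0)\bigr)~=~e_\lambda(x_0,x_0)\,.
\]
Since $x_0,x_1$ were arbitrary, $x\mapsto e_\lambda(x,x)=F_x(x)$ is constant on $M$, and the proof is complete.

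There is no real obstacle here: all of the substantive work — establishing symmetry and isometry-invariance of the spectral kernel — has already been done in Proposition \ref{prop:F-properties} and Lemma \ref{lem:e-invariance}. The only bookkeeping point is to be explicit that completeness is not an extra hypothesis, but rather a consequence of homogeneity, so that Lemma \ref{lem:e-invariance} is legitimately available in the present setting.
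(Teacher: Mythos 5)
Your argument is correct and is exactly the intended one: the paper supplies no proof body (note the \qed in the statement) precisely because the corollary follows immediately from Lemma \ref{lem:e-invariance} and transitivity, just as you spell out. Your preliminary remark that completeness follows from homogeneity matches the sentence the authors place directly before the corollary.
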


The next lemma asserts, in part, that for each $x \in M$, the function 
$y \mapsto |\nabla_x F_x (y)|^2$ is integrable with respect to $d \mu(y)$. Note that the more common statement would be that $y \mapsto |\nabla_y F_x (y)|^2$ is integrable.

\begin{lem} \label{lem:Laplacian-gradient}
If $(M,g)$ is a connected, Riemannian homogeneous space, 
then for each $x \in M$, the function $y \mapsto |\nabla_x F_x(y)|$
belongs to $\Hcal$, and we have 
\begin{equation} \label{eq:int-by=parts-in-y}
   \int_M | \nabla_x F_x(y)|^2\, d\mu(y)~
 =~ \int_M F_x(y) \cdot \Delta_x F_x(y)\, d \mu(y)\,. 
\end{equation}
\end{lem}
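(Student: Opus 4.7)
The plan is to derive the identity from a pointwise Bochner--Leibniz computation and then integrate in $y$, exploiting that $x \mapsto \|F_x\|^2 = e_\lambda(x,x)$ is constant by Corollary~\ref{coro:diagonal-constant}. By Proposition~\ref{prop:F-properties} applied with $f \equiv 1$, one has $F_x(y) = F_y(x)$, so for each fixed $y$ the function $z \mapsto F_z(y) = F_y(z)$ is smooth. The classical identity $\Delta(h^2) = 2h\,\Delta h - 2|\nabla h|^2$ applied to $h(z) := F_z(y)$ at $z = x$ then yields the pointwise relation
\[
|\nabla_x F_x(y)|^2~ =~ F_x(y)\,\Delta_x F_x(y)~ -~ \tfrac{1}{2}\,\Delta_x\bigl[F_x(y)^2\bigr]\,.
\]

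The first right-hand term becomes tractable after integration in $y$: by Corollary~\ref{coro:Lap-integrable}, $\Delta_x F_x(y) = \Delta_y F_x(y)$ is in $\Hcal$ as a function of $y$, so Cauchy--Schwarz makes $y \mapsto F_x(y)\,\Delta_x F_x(y)$ integrable. Since $F_x \in \Hcal_\lambda$ lies in the form domain $\qdom$, the quadratic form identity~\eqref{eq:dirichlet2} gives
\[
\int_M F_x(y)\,\Delta_x F_x(y)\,d\mu(y)~ =~ \iprod{F_x,\,\Delta F_x}~ =~ \int_M |\nabla_y F_x(y)|^2\,d\mu(y)~ <~ \infty\,.
\]
The lemma therefore reduces to showing that $\int_M \Delta_x\bigl[F_x(y)^2\bigr]\,d\mu(y) = 0$, which is formally $\Delta_x \|F_x\|^2 = \Delta_x\,e_\lambda(x,x) = 0$ by Corollary~\ref{coro:diagonal-constant}.

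The main obstacle is justifying this interchange rigorously. My plan is to truncate in $y$ via a compact exhaustion $K \nearrow M$, pair in $x$ with $\varphi \in C_c^\infty(M)$, and apply Fubini (valid on $K \times \supp(\varphi)$ by joint smoothness), integration by parts in $x$ (using compact support of $\varphi$), and dominated convergence (with dominator $c\,|\Delta\varphi|$) to obtain
\[
\int_M \varphi(x)\int_K \Delta_x\bigl[F_x(y)^2\bigr]\,d\mu(y)\,d\mu(x)~ \longrightarrow~ c\int_M \Delta\varphi\,d\mu~ =~ 0\,.
\]
Pointwise, rearranging the Bochner identity shows $\int_K \Delta_x[F_x(y)^2]\,d\mu(y) = 2 I_K(x) - 2 J_K(x)$ with $I_K(x) \to A = \int_M F_x\,\Delta_x F_x\,d\mu(y)$ (finite, and constant in $x$ by the isometry invariance from Lemma~\ref{lem:e-invariance}) and $J_K(x) \nearrow B \in [0,\infty]$ (also constant in $x$ by the same invariance). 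Passing $K \to \infty$ under the outer integral then yields $2(A-B)\int_M\varphi\,d\mu = 0$ for every nonnegative test function $\varphi$, which both rules out $B = \infty$ and forces $A = B$. This yields the integrability of $y \mapsto |\nabla_x F_x(y)|$ and the claimed identity.
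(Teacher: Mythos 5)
Your proof is correct and shares the core strategy with the paper's: the pointwise Bochner identity $\Delta_x(F_x^2) = 2F_x\,\Delta_x F_x - 2|\nabla_x F_x|^2$, followed by showing $\int_M \Delta_x\bigl[F_x(y)^2\bigr]\,d\mu(y) = 0$ by pairing against a compactly supported test function, applying Fubini, integrating by parts in $x$, and using the constancy of $x\mapsto e_\lambda(x,x)$ (Corollary~\ref{coro:diagonal-constant}). The genuine difference is how integrability of $y\mapsto|\nabla_x F_x(y)|^2$ is obtained. The paper cites Proposition~\ref{prop:gradient-estimate} from the Appendix (proved there by a Riesz-representation argument) to secure this \emph{a priori}, so Fubini applies with no cutoff and the test-function computation is short. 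You bypass that proposition entirely: you introduce a compact exhaustion $K\nearrow M$, run Fubini on compacts, and recover integrability ($B<\infty$) and the identity ($A=B$) simultaneously, via monotone convergence on the gradient term, dominated convergence on the rest, and the constancy in $x$ of the limits $A$ and $B$ coming from Lemma~\ref{lem:e-invariance}. Both routes work; the paper's is shorter once the appendix result is granted, while yours is more self-contained but needs the extra convergence care you flagged, in particular the argument ruling out $B=\infty$. (Your side-observation that $\int F_x\,\Delta_x F_x\,d\mu(y) = \|\nabla F_x\|^2$, gradient in the second variable, is correct but not needed: Cauchy--Schwarz with Corollary~\ref{coro:Lap-integrable} already gives finiteness of $A$.)
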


\begin{proof}
A well-known formula gives 
\[ \Delta_x F_x^2~ =~ 2\, F_x \cdot \Delta_x F_x~ -~ 2\, |\nabla_x F_x|^2. \]
By Proposition \ref{prop:gradient-estimate} in the Appendix, the 
function $y \mapsto |\nabla_x F_x |^2$ is integrable with respect to $d \mu(y)$, 
and by Corollary \ref{coro:Lap-integrable},
$y \mapsto F_x(y) \cdot \Delta_x F_x(y)$ is integrable.
Hence $y \mapsto \Delta_x F_x^2(y)$ is integrable with integral not depending on $x$ by homogeneity, and therefore, 
to verify (\ref{eq:int-by=parts-in-y}), it suffices to prove
that $\int \Delta_x F_x^2(y)\, d\mu(y) =0$. 

Let $u\colon M \rightarrow \Rbb$ be a compactly supported smooth function. 
Using Fubini's theorem, the fact that $\Delta$ is self-adjoint,
and the identity $\|F_x\|^2 = F_x(x)$,  we find that
\begin{eqnarray*}
  \int_M u(x) \left(\int_M \Delta_x F_x^2(y)\, d \mu(y) \right)\, d\mu(x)  &=&
  \int_{M} \int_{M} u(x) \cdot \Delta_x F_x^2(y)\, d \mu(x)\, d \mu(y) \\
  &=& \int_M \int_M \left(\Delta u(x) \right) \cdot F_x(y)^2\, d \mu(x)\, d\mu(y) \\
  &=&  \int_M \Delta u(x) \int_M  F_x(y)^2 \, d\mu(y)\, d\mu(x) \\
  &=&  \int_M (\Delta u(x)) \cdot F_x(x) \, d\mu(x) \\
  &=& \int_M  u(x) \cdot \Delta_x F_x(x)\, d \mu(x)\,.
\end{eqnarray*}
Corollary \ref{coro:diagonal-constant} implies that the last integral equals zero.
Since $u$ is arbitrary, the claim follows. 
\end{proof}

\begin{prop}  \label{coro:gradient-est-noncompact}
If $(M,g)$ is a connected, Riemannian homogeneous space, 
then for each $x \in M$, we have 
\[   \left\| | \nabla_x F_x| \right\|~ \leq~ \sqrt{\lambda} \cdot \left\|F_x\right\|. \]
\end{prop}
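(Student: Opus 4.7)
The plan is to chain together the two identities already established in this section with the defining property of the spectral subspace $\Hcal_\lambda$. Concretely, I would start from the integration-by-parts formula of Lemma \ref{lem:Laplacian-gradient}:
\[
\int_M |\nabla_x F_x(y)|^2\, d\mu(y)~ =~ \int_M F_x(y) \cdot \Delta_x F_x(y)\, d\mu(y)\,.
\]
The right-hand side as written involves $\Delta$ acting in the \emph{first} variable, which is awkward because $F_x$ lives in $\Hcal_\lambda$ as a function of its \emph{second} variable. The key step is to convert between the two, which is exactly the content of Corollary \ref{coro:Lap-integrable}: $\Delta_x F_x(y) = \Delta_y F_x(y)$. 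Substituting this replacement produces
\[
\||\nabla_x F_x|\|^2~ =~ \int_M F_x(y) \cdot \Delta_y F_x(y)\, d\mu(y)~ =~ \iprod{F_x, \Delta F_x}\,,
\]
an inner product taken on $L^2(M)$ where $\Delta$ acts in its usual (second-variable) sense.

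From here it remains to invoke the spectral theorem. Since $F_x \in \Hcal_\lambda$, we have $F_x = E_\lambda F_x$, and hence
\[
\iprod{F_x, \Delta F_x}~ =~ \int_0^\lambda \nu\, d\iprod{E_\nu F_x, F_x}~ \leq~ \lambda \cdot \iprod{F_x, F_x}~ =~ \lambda \cdot \|F_x\|^2\,.
\]
Combining the two displays and taking square roots yields the desired bound $\||\nabla_x F_x|\| \leq \sqrt{\lambda}\cdot \|F_x\|$.

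I do not expect any genuine obstacle here, since the work has been done in the preceding lemmas: Lemma \ref{lem:Laplacian-gradient} provides the integration-by-parts identity (which itself relied on the delicate integrability statement of Proposition \ref{prop:gradient-estimate} in the Appendix and on homogeneity via Corollary \ref{coro:diagonal-constant}), and Corollary \ref{coro:Lap-integrable} supplies the symmetry of $\Delta$ between the two variables of $e_\lambda$. The only point requiring care is ensuring that all quantities in the resulting inner product $\iprod{F_x, \Delta F_x}$ are well-defined elements of $L^2(M)$ so that the spectral-theorem estimate applies, but this is precisely guaranteed by Corollary \ref{coro:Lap-integrable} (which asserts square integrability of $\Delta_x F_x$) together with $F_x \in \Hcal_\lambda \subseteq \dom(\Delta)$.
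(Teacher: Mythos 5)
Your argument is correct and follows the same route as the paper: express $\||\nabla_x F_x|\|^2$ as $\iprod{F_x, \Delta F_x}$ via Lemma \ref{lem:Laplacian-gradient}, then apply the spectral theorem using $F_x = E_\lambda F_x$. You are slightly more explicit than the paper in invoking Corollary \ref{coro:Lap-integrable} to pass from $\Delta_x F_x$ to $\Delta_y F_x$, but the paper uses exactly this identification implicitly when it writes ``the claim then follows from Lemma \ref{lem:Laplacian-gradient}.''
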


\begin{proof}
Since $F_x = E_{\lambda}(F_x)$, 
we have
\begin{eqnarray*} 
\langle F_x, \Delta F_x\rangle
 &=& \left\langle \int_0^{\lambda} dE_{\nu}(F_x),  \int_0^{\lambda} \nu \cdot dE_{\nu}(F_x) \right\rangle \\
      &=& \int_0^{\lambda} \nu \cdot  d\|E_{\nu}(F_x)\|^2 \\
      &\leq&  \lambda  \int_0^{\lambda}   d\|E_{\nu}(F_x)\|^2 \\
       &=& \lambda \cdot \|F_x\|^2.
\end{eqnarray*} 
The claim then follows from  Lemma \ref{lem:Laplacian-gradient}.
\end{proof}

We now give the general version of Theorem \ref{thm:transitiveeig}. 
Note that if $(M,g)$ is homogeneous, then $x \mapsto \ct_x(\lambda)$ 
is constant.

\begin{thm}  
\label{thm:homogeneous-noncompact}
If $(M,g)$ is a connected, Riemannian homogeneous space, then 
for each $x \in M$,
$\>\alpha\in\, (0,\pi/2)$, and $\lambda > 0$, we have
\begin{equation}
\label{eq:homogeneous-noncompact}
\ct_x(\lambda)~ \le~
\frac{1}{\int_0^{\pi/2} V(\theta/\sqrt \lambda) \sin(2\theta)
\,d\theta}
~\leq~ \frac{1}{\cos^2 \alpha\cdot
V(\lambda^{-1/2} \cdot \alpha)}\,.
\end{equation}
\end{thm}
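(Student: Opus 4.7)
The plan is to carry out the four steps of the proof of Theorem \ref{thm:transitiveeig} in the noncompact setting, substituting the tools built up in the current section for their compact counterparts. By Corollary \ref{coro:diagonal-constant}, $\rho := \|F_x\|$ is constant in $x$, and by definition \eqref{defn:count-noncompact} we have $\ct_x(\lambda) = \rho^2$; it therefore suffices to bound $\rho^{-2}$ from below. Applying Proposition \ref{prop:F-properties} with $f = \mathbf{1}_{[0,\lambda]}$ gives $(E_\lambda F_x)(y) = \iprod{F_x, F_y}$, and since $F_x \in \Hcal_\lambda$ this simplifies to $F_x(y) = \iprod{F_x, F_y}$, i.e.\ $\cos\psi(x,y) = F_x(y)/\rho^2$, where $\psi(x,y)$ denotes the angle between $F_x$ and $F_y$ in $L^2(M,\mu)$. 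Squaring and integrating in $y$ then yields the analogue of \eqref{eq:value-rho}:
\begin{equation*}
\rho^{-2} ~=~ \int_M \cos^2\psi(x,y)\,d\mu(y)\,.
\end{equation*}

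The crux of the argument is the Lipschitz estimate $\psi(x,y) \le \sqrt\lambda\cdot\dist(x,y)$. Since $(M,g)$ is homogeneous and hence complete, the Hopf--Rinow theorem supplies a unit-speed minimizing geodesic $\gamma\colon [0,L] \to M$ joining $x$ to $y$ with $L = \dist(x,y)$. Smoothness of $e_\lambda$ in its first argument gives, pointwise in $z$,
\begin{equation*}
F_{\gamma(t)}(z) - F_{\gamma(s)}(z) ~=~ \int_s^t g\bigl(\nabla_x F_{\gamma(u)}(z),\, \gamma'(u)\bigr)\,du\,,
\end{equation*}
and Minkowski's integral inequality in $L^2(M,\,d\mu(z))$ combined with Proposition \ref{coro:gradient-est-noncompact} yields $\|F_{\gamma(t)} - F_{\gamma(s)}\|_{L^2} \le \sqrt\lambda\cdot\rho\cdot(t-s)$. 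Consequently $F\circ\gamma$ is a rectifiable curve lying on the sphere of radius $\rho$ in $L^2(M,\mu)$ (by homogeneity) with $L^2$-length at most $\sqrt\lambda\,\rho\, L$; any such curve has length at least $\rho\cdot\psi(x,y)$, which gives the angle bound.

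With the angle bound in hand, the first inequality of \eqref{eq:homogeneous-noncompact} follows by restricting the displayed integral to $B(x,r_\lambda)$ with $r_\lambda := \pi/(2\sqrt\lambda)$, using $\cos\psi(x,y) \ge \cos(\sqrt\lambda\cdot\dist(x,y)) \ge 0$ there, then invoking the coarea formula, changing variables via $\theta = \sqrt\lambda r$, and integrating by parts exactly as in the proof of Theorem \ref{thm:transitiveeig}. The second inequality follows by instead restricting $r$ to $[0, \alpha/\sqrt\lambda]$ and bounding $\cos^2(\sqrt\lambda r) \ge \cos^2\alpha$. The principal obstacle -- justifying Minkowski's integral inequality in this possibly infinite-volume setting -- reduces to joint measurability of $(u,z) \mapsto |\nabla_x F_{\gamma(u)}(z)|$, which is immediate from smoothness of $e_\lambda$, together with integrability in $u$ of its $L^2$-norm in $z$, which is immediate from Proposition \ref{coro:gradient-est-noncompact} and compactness of $[s,t]$.
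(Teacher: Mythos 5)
Your proof is correct and follows essentially the same route as the paper's, which simply reruns the compact-case argument for Theorem \ref{thm:transitiveeig} with \eqref{eq:gradbound} replaced by Proposition \ref{coro:gradient-est-noncompact}. The extra care you take with Minkowski's integral inequality to establish the Lipschitz estimate is a welcome clarification of a step the paper leaves implicit (and note that $F_x(y)=\iprod{F_x,F_y}$ is really the reproducing property, which is used inside the proof of Proposition \ref{prop:F-properties} rather than being its statement, though the conclusion is of course correct).
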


\begin{proof}
The proof is the same as the proof of Theorem \ref{thm:transitiveeig} 
except that one replaces inequality
(\ref{eq:gradbound}) with Proposition
\ref{coro:gradient-est-noncompact}.
\end{proof}


\section{An upper bound for the heat kernel} \label{sec:spectrals}

Let $(M,g)$ be a homogeneous Riemannian manifold, and let $\Delta$ be 
the associated Laplacian. 
The heat kernel $p_t(x, y)$ is the integral kernel for the heat operator, $e^{-t \Delta}$.
The heat kernel has two important interpretations:
It is the integral kernel for the solution operator for the heat equation 
$\partial_t u(x,t)= \Delta_x u(x,t)$ \cite{Li-Book,Davies}. 
From the probabilistic viewpoint, the heat kernel is the 
transition subprobability density of Brownian motion on $(M,g)$, the minimal diffusion whose
infinitesimal generator is $\Delta$.

\cite{Pittet} shows that there are three types of noncompact homogeneous spaces:
\begin{itemize}
\item $M$ is nonamenable, $\ct_x(\lambda) = 0$ for some $\lambda > 0$, 
and there are constants $c_1, c_2 \in (0, \infty)$
such that $c_1 e^{-c_2 t} \le p_t(x, x) \le c_2 e^{-c_1 t}$ for all $t > 1$;
\item $M$ is amenable with exponential volume growth, $\ct_x(\lambda) > 0$ for all $\lambda > 0$, 
and there are constants $c_1, c_2 \in (0, \infty)$
such that $c_1 e^{-c_2 t^{1/3}} \le p_t(x, x) \le c_2 e^{-c_1 t^{1/3}}$ for all $t > 1$;
\item $M$ has polynomial volume growth of order $d$ (the dimension of $M$), 
and there are constants $c_1, c_2 \in (0, \infty)$
such that $c_1 t^{-d/2} \le p_t(x, x) \le c_2 t^{-d/2}$ for all $t > 1$.
\end{itemize}
Furthermore, these three cases can be distinguished by their corresponding isoperimetric profiles.
In the case of Lie groups, even more refined information is known \cite{Varop}.
The results of both of these authors
are difficult; we show here how to obtain comparable upper bounds on the heat
kernel that depend only on volume information, as well as to provide explicit constants
in those upper bounds. Note that exponential volume growth
can occur for both amenable and nonamenable Lie groups, so that we cannot hope to obtain exponential
decay of the heat kernel merely from a bound on the volume growth.

By Proposition \ref{lem:kernel2}, the heat kernel is
the Laplace transform of the spectral function (as a measure): 
\begin{equation}\label{eq:laplace}
p_t(x, y)~ =~ \int_0^\infty e^{-\lambda t} \,de_\lambda(x, y)\,.
\end{equation}

It is well known that bounds on the spectral function lead to bounds on
the heat kernel. 
We give two illustrations.
Let $\Gamma$ be the classical gamma function.

\begin{thm} \label{thm:polygrowth}
If $(M,g)$ is a homogeneous space such that $V(r) \geq c \cdot r^\beta$
for all $0 < r < D$, then for each $(x,y) \in M \times M$ and $t>0$,
\begin{equation} \label{est:heat-kernel}
   p_t(x,y)~ \leq~ \frac{1}{\vol(M)}~  
+~ \frac{\Gamma(\beta/2+1) \cdot 2^{\frac{\beta}{2}} }{c \cdot m_\beta} \cdot
t^{-\frac{\beta}{2}} \,,
\end{equation}
where $m_\beta := \int_0^{\pi/2} \theta^\beta \sin(2\theta) \,d\theta$.
If $\vol M = \infty$, then we interpret $1/\vol M$ as $0$.
\end{thm}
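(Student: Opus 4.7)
The strategy is to reduce the off-diagonal estimate to the diagonal one via Cauchy--Schwarz, and then to bound $p_t(x,x)$ using the Laplace-transform representation \eqref{eq:laplace} combined with the spectral-function estimates of Section \ref{sec:noncompact}.

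First, the semigroup identity $p_t(x,y) = \int_M p_{t/2}(x,z)\, p_{t/2}(z,y)\, d\mu(z)$ together with the Cauchy--Schwarz inequality gives $p_t(x,y) \le \sqrt{p_t(x,x)\, p_t(y,y)}$. Homogeneity of $(M,g)$ (as in Corollary \ref{coro:diagonal-constant}, which transfers through \eqref{eq:laplace} to $p_t$) yields $p_t(x,x) = p_t(y,y)$, so it suffices to bound $p_t(x,x)$.

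Second, applying \eqref{eq:laplace} I would write $p_t(x,x) = \int_{[0,\infty)} e^{-\lambda t}\, dN_x(\lambda)$, where $N_x(\lambda) := e_\lambda(x,x)$ is non-decreasing. The measure $dN_x$ has a point mass at $\lambda = 0$ of size $1/\vol M$ (interpreted as $0$ when $\vol M = \infty$), arising from the projection onto the constants. Separating this atom and integrating by parts on $(0,\infty)$ yields
\[
p_t(x,x)~ =~ \frac{1}{\vol M}~ +~ t \int_0^\infty e^{-\lambda t} \Bigl[N_x(\lambda) - \frac{1}{\vol M}\Bigr]\, d\lambda\,,
\]
with the boundary term at infinity vanishing thanks to the polynomial control on $N_x$ supplied by Corollary \ref{coro:volume-ass-gen}.

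Third, I would establish the uniform bound $N_x(\lambda) - 1/\vol M \le 2^{\beta/2}\lambda^{\beta/2}/(c m_\beta)$ for every $\lambda > 0$. Corollary \ref{coro:volume-ass-gen} supplies $N_x(\mu) \le \mu^{\beta/2}/(c m_\beta)$ whenever $\mu \ge (\pi/(2D))^2$; combining this with the monotonicity $N_x(\lambda) \le N_x(2\lambda)$ applied at $\mu = 2\lambda$ yields $N_x(\lambda) \le 2^{\beta/2}\lambda^{\beta/2}/(c m_\beta)$ for $\lambda \ge (\pi/(2D))^2/2$. For smaller $\lambda$, the compact case is handled by Li's inequality \eqref{eq:transitivegap}, which forces $\lambda_1 > (\pi/(2D))^2$ and hence $N_x(\lambda) - 1/\vol M = 0$ there; in the non-compact case with $\vol M = \infty$, a direct application of Theorem \ref{thm:2ndmain} together with the volume hypothesis yields the same uniform bound. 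Inserting this into the integral and using the substitution $u = \lambda t$ together with $\int_0^\infty e^{-u} u^{\beta/2}\, du = \Gamma(\beta/2 + 1)$ produces
\[
t \int_0^\infty e^{-\lambda t} \cdot \frac{2^{\beta/2}\lambda^{\beta/2}}{c m_\beta}\, d\lambda~ =~ \frac{2^{\beta/2}\Gamma(\beta/2 + 1)}{c m_\beta}\, t^{-\beta/2}\,,
\]
which, added to $1/\vol M$, is the claimed bound on $p_t(x,x)$.

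The principal obstacle is obtaining the uniform $\lambda^{\beta/2}$ bound on $N_x(\lambda) - 1/\vol M$ across the full range $\lambda > 0$. The factor $2^{\beta/2}$ arises precisely from the monotonicity trick used to halve the threshold supplied by Corollary \ref{coro:volume-ass-gen}, and the low-frequency regime requires either Li's gap estimate (compact case) or a separate small-$\lambda$ argument via Theorem \ref{thm:2ndmain} (non-compact case).
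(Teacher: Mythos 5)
Your proposal follows the same overall path as the paper's own proof: semigroup property plus Cauchy--Schwarz to reduce to the diagonal, the Laplace--Stieltjes representation \eqref{eq:laplace}, integration by parts, a pointwise bound on $\ct_x(\lambda)$, and a Gamma-integral substitution. The differences are in the middle steps, and here your version is actually more careful than the paper's in two respects. First, your integration-by-parts identity correctly isolates the atom at $\lambda = 0$: $p_t(x,x) = 1/\vol M + t\int_0^\infty e^{-\lambda t}[\ct_x(\lambda) - 1/\vol M]\,d\lambda$, whereas the paper writes $p_t(x,x) = 1/\vol M + t\int_0^\infty e^{-\lambda t}\,\ct_x(\lambda)\,d\lambda$, which over-counts by $1/\vol M$ (harmless for an upper bound, but not an identity). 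Second, your monotonicity-doubling trick, $\ct_x(\lambda) \le \ct_x(2\lambda) \le (2\lambda)^{\beta/2}/(c\,m_\beta)$, gives a clean explanation of where the $2^{\beta/2}$ in the statement comes from. The paper instead asserts the stronger bound $\ct_x(\lambda) \le \lambda^{\beta/2}/(c\,m_\beta)$ \emph{for all} $\lambda > 0$, citing Corollaries \ref{coro:volume-ass} and \ref{coro:volume-ass-gen}, even though these only deliver it for $\lambda \ge (\pi/2D)^2$; the paper's proof then never uses the $2^{\beta/2}$ at all.

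There is, however, a genuine gap in the small-$\lambda$ regime that you should not wave away. Your treatment of the compact case is fine as long as one takes $D$ to be the diameter (so that Li's bound \eqref{eq:transitivegap} gives $\lambda_1 > (\pi/2D)^2$ and hence $\ct_x(\lambda) - 1/\vol M = 0$ for $\lambda < (\pi/2D)^2$), but your claim that in the noncompact case ``a direct application of Theorem \ref{thm:2ndmain} together with the volume hypothesis yields the same uniform bound'' does not follow. For $\lambda < (\pi/2D)^2$ the relevant integral $\int_0^{\pi/2} V(\theta/\sqrt\lambda)\sin(2\theta)\,d\theta$ involves $V$ at radii $\theta/\sqrt\lambda > D$, where the hypothesis $V(r)\ge c\,r^\beta$ gives nothing, so Theorem \ref{thm:2ndmain} does not produce $\ct_x(\lambda) \lesssim \lambda^{\beta/2}$ there. (The paper has the same unaddressed range.) You should either assume $D = \infty$ in the noncompact case, or give a separate argument for small $\lambda$; as written, this step is asserted rather than proved.
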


\begin{proof}
From the semigroup property of the heat kernel and the Cauchy--Schwarz inequality,
one finds that 
\[
|p_t(x,y) - 1/\vol M|~
\leq~
\bigl([p_{t}(x,x) - 1/\vol M] \cdot
[p_{t}(y,y) - 1/\vol M]\bigr)^{\frac{1}{2}}
\,.
\]
Hence, it suffices 
to prove (\ref{est:heat-kernel}) for $x = y$.
In that case, since $de_{\lambda}(x,x)= dN_x(\lambda)$, we have  
\begin{eqnarray}
  p_t(x,x)
    &=& \int_{0}^{\infty} e^{-\lambda \cdot t}\, d\ct_x(\lambda)  \label{eqn:heat-kernel} \\
   &=&   \frac1{\vol M}
   ~ +~          t \int_{0}^{\infty} e^{-\lambda \cdot t} \cdot
   \ct_x(\lambda)\, d \lambda\,.
   \nonumber
\end{eqnarray}
By Corollary
\ref{coro:volume-ass} (in the compact case) or Corollary \ref{coro:volume-ass-gen} (in general),
$\ct_x(\lambda) \leq c^{-1} \cdot m_\beta^{-1} \cdot \lambda^{\frac{\beta}{2}}$ for
all $\lambda > 0$,
and so
\[ p_t(x,x) ~ \leq~ \frac1{\vol M} ~ +~
      \frac{t}{c \cdot m_\beta}
    \int_{0}^{\infty} e^{-\lambda \cdot t} \cdot \lambda^{\frac{\beta}{2}}\, d
    \lambda\,.
\]
The claim then follows from 
change of variable in the definition of $\Gamma$.
\end{proof}

For our next illustration, we use the upper incomplete gamma function,
\[
\Gamma(a, x) ~:=~ \int_x^\infty s^{a-1} e^{-s}\,ds
\,.
\]
This function is useful because $\Gamma(a, x) \le \Gamma(a)$ for all $x$
and $\Gamma(a, x) < 2 x^{a-1} e^{-x}$ for $x > a-1$ \cite{NaPa}.
Using this function, we provide heat-kernel bounds when the volume 
of a ball of radius $r$ grows exponentially for large $r$.

\begin{thm}
Let $c_0, c_1, c_2 > 0$ be constants.
If $(M,g)$ is a homogeneous space such that $V(r) \ge c_0 \cdot r^d$ for $r \le r_0$ and
$V(r) \geq c_1 \cdot e^{c_2 r}$
for $r > r_0$, then for each $(x,y) \in M \times M$ and $t>0$,
\[
p_t(x, y)
~\le~ 
\frac{\pi c_2 + 2}{c_1}\cdot \exp\left(-c_3 \cdot t^{1/3} \right)~
+~
\frac{2}{c_0} \cdot \left(\frac{4}{\pi} \right)^d 
t^{-d/2} \cdot \Gamma\bigl(1+d/2, \pi^2t/16r_0^2\bigr)
\,,
\]
where $c_3 := (\pi c_2/4)^{2/3}$.
If in addition $\ct_x(\lambda) = 0$ for $\lambda < \lambda_*$, then
\[
p_t(x, y)
~\le~ 
\frac{2}{c_1} e^{-\lambda_* t}~
+~
 \frac{2}{c_0} \cdot \left(\frac{4}{\pi} \right)^d 
t^{-d/2} \cdot \Gamma\bigl(1+d/2, \max\{\lambda_*, \pi^2t/16r_0^2\}\bigr)
\,.
\]
\end{thm}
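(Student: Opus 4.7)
My plan is to follow the template of Theorem \ref{thm:polygrowth}. First I would reduce to the diagonal $x = y$ via Cauchy--Schwarz and the semigroup property, obtaining $p_t(x,x) = t \int_0^\infty e^{-\lambda t} \ct_x(\lambda)\,d\lambda$ (the boundary term $1/\vol M$ vanishes because exponential volume growth forces $\vol M = \infty$). Applying Theorem \ref{thm:homogeneous-noncompact} with $\alpha = \pi/4$ gives $\ct_x(\lambda) \le 2/V(\pi/(4\sqrt\lambda))$, and combined with the two volume hypotheses this splits the analysis at the threshold $\lambda_0 := \pi^2/(16 r_0^2)$, the value of $\lambda$ at which $\pi/(4\sqrt\lambda) = r_0$.

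For $\lambda \ge \lambda_0$, the polynomial hypothesis yields $\ct_x(\lambda) \le (2/c_0)(4/\pi)^d \lambda^{d/2}$, and the substitution $u = \lambda t$ produces $(2/c_0)(4/\pi)^d\, t^{-d/2}\,\Gamma(1+d/2,\, \pi^2 t/(16 r_0^2))$, which is exactly the second summand of the bound. For $\lambda \le \lambda_0$, the exponential hypothesis gives $\ct_x(\lambda) \le (2/c_1) e^{-c_2\pi/(4\sqrt\lambda)}$, and one must establish the key inequality
\[
t \int_0^{\lambda_0} e^{-\lambda t - c_2\pi/(4\sqrt\lambda)}\,d\lambda
~\le~
\frac{\pi c_2 + 2}{2}\, e^{-c_3 t^{1/3}}
\]
with $c_3 = (\pi c_2/4)^{2/3}$. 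My approach is to integrate by parts (with $u = e^{-c_2\pi/(4\sqrt\lambda)}$ and antiderivative $v = -e^{-\lambda t}$; the boundary term at $\lambda = 0$ vanishes and the one at $\lambda_0$ is negative and may be discarded) followed by the substitution $w = c_2\pi/(4\sqrt\lambda)$, converting the left-hand side to $\int_{c_2 r_0}^\infty e^{-(c_2\pi)^2 t/(16 w^2) - w}\,dw$. The resulting $w$-exponent attains its AM--GM minimum $3 c_3 t^{1/3}/2^{2/3}$, which exceeds $c_3 t^{1/3}$ and so leaves room to extract the required $e^{-c_3 t^{1/3}}$ factor.

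The main obstacle is obtaining the precise coefficient $\pi c_2 + 2$, which interpolates between the $c_2 = 0$ regime (where the trivial bound $t \int_0^{\lambda_0} e^{-\lambda t}\,d\lambda \le 1$ yields $2/c_1$) and the large-$c_2$ asymptotic $\pi c_2/c_1$. I would split $\int_{c_2 r_0}^\infty$ at the balance point $w_1 := c_3 t^{1/3}$ where $(c_2\pi)^2 t/(16 w_1^2) = c_3 t^{1/3}$: on $[w_1, \infty)$ the bound $e^{-(c_2\pi)^2 t/(16 w^2)} \le 1$ gives $\int_{w_1}^\infty e^{-w}\,dw = e^{-c_3 t^{1/3}}$ (contributing the constant $2$ to the numerator after the outer $2/c_1$), and on $[c_2 r_0, w_1]$ the inequality $(c_2\pi)^2 t/(16 w^2) \ge c_3 t^{1/3}$ combined with $\int_{c_2 r_0}^{w_1} e^{-w}\,dw \le w_1 - c_2 r_0$ yields a contribution of $(w_1 - c_2 r_0) e^{-c_3 t^{1/3}}$; since a direct computation shows $w_1 = c_3 t^{1/3} \le \pi c_2/2$ precisely when $t \le 2\pi c_2$, this last contribution is at most $(\pi c_2/2) e^{-c_3 t^{1/3}}$ in that range. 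Extending the estimate uniformly to all $t > 0$ requires a refinement---for instance, invoking the elementary inequality $x e^{-x} \le 1/e$ to absorb the excess when $w_1 > \pi c_2/2$, or exploiting the faster-than-$e^{-c_3 t^{1/3}}$ decay available in the large-$t$ regime via the saddle-point surplus---and this is the most delicate piece of the argument.

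For the refined statement with $\ct_x(\lambda) = 0$ for $\lambda < \lambda_*$, the small-$\lambda$ analysis simplifies: on $[\lambda_*, \lambda_0]$ (when nonempty) one uses the uniform bound $\ct_x(\lambda) \le 2/c_1$ and integrates $t \int_{\lambda_*}^\infty e^{-\lambda t}\,d\lambda = e^{-\lambda_* t}$ to produce $(2/c_1) e^{-\lambda_* t}$; on $[\max\{\lambda_*, \lambda_0\}, \infty)$ the polynomial bound yields the incomplete gamma term with argument $\max\{\lambda_*, \lambda_0\} t$, matching the second summand of the refined bound.
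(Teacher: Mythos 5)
Your plan reproduces the paper's strategy in all essentials: reduce to the diagonal via the semigroup property and Cauchy--Schwarz; apply Theorem~\ref{thm:2ndmain} with $\alpha = \pi/4$ to get $\ct_x(\lambda)\le 2/V\bigl(\pi/(4\sqrt\lambda)\bigr)$; split the Laplace--transform integral $t\int_0^\infty e^{-\lambda t}\ct_x(\lambda)\,d\lambda$ at $\lambda_0 := \pi^2/(16 r_0^2)$; and use the polynomial hypothesis above $\lambda_0$ and the exponential hypothesis below. Your integration by parts and the substitution $w = c_2\pi/(4\sqrt\lambda)$ are an unnecessary (but harmless) detour---the paper substitutes $s = \lambda^{-1/2}$ directly and works with the integrand $e^{-\lambda t - c_3^{3/2}\lambda^{-1/2}}$---and your split point $w_1 = c_3 t^{1/3}$ coincides with the paper's $\widetilde\lambda := c_3 t^{-2/3}$ under that change of variables, so the two arguments are substantively the same. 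Your handling of the polynomial term and of the refined statement with $\lambda_*$ is correct.

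The obstacle you flag in obtaining the constant $\pi c_2 + 2$ uniformly in $t>0$ is, however, a genuine gap, and neither of your proposed repairs closes it. On $[c_2 r_0, w_1]$ the exponent $(c_2\pi)^2 t/(16w^2)+w$ is decreasing (its unique critical point lies at $2^{1/3}w_1 > w_1$) with value $2w_1$ at the right endpoint, so the best estimate available from that piece is $(w_1 - c_2 r_0)e^{-2w_1}\le (w_1 e^{-w_1})\,e^{-c_3 t^{1/3}}\le e^{-1}\,e^{-c_3 t^{1/3}}$; this is at most $(\pi c_2/2)\,e^{-c_3 t^{1/3}}$ only when $c_2\ge 2/(\pi e)$, so neither the $xe^{-x}\le 1/e$ trick nor the saddle-point surplus alone rescues small $c_2$. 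You should know, though, that the paper's own proof of this step has exactly the same soft spot: the last inequality in its final display amounts to bounding $\int_{\widetilde\lambda^{-1/2}}^\infty 2s^{-3}e^{-c_3^{3/2}s}\,ds$ by $(2c_3^{3/2}/t)\,e^{-c_3 t^{1/3}}$, but the natural estimates $s^{-3}\le\widetilde\lambda^{3/2}$ and $\int_{\widetilde\lambda^{-1/2}}^\infty e^{-c_3^{3/2}s}\,ds = c_3^{-3/2}e^{-c_3 t^{1/3}}$ yield only $(2/t)\,e^{-c_3 t^{1/3}}$, which gives the claimed bound precisely when $c_3^{3/2}\ge 1$, i.e.\ $c_2\ge 4/\pi$. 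So you have correctly located a delicate point in the argument; completing it as stated would require a sharper estimate of the near-field integral than either you or the paper provide, or else an enlargement of the numerator (for example to $\max\{\pi c_2, 4\}+2$).
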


The first bound shows polynomial decay for small $t$ and stretched 
exponential decay for large $t$ since
the second term is less than $4^{d+1} e^{-\pi^2t/16r_0^2}/c_ \pi^d$ for 
$t > 8dr_0^2/\pi^2$.
The second bound similarly shows polynomial or exponential decay.

\begin{proof}
As in the preceding proof, it suffices to consider the case $x = y$.
Using $\alpha= \pi/4$ in Theorem \ref{thm:2ndmain}, we have 
\[
e_{\lambda}(x,x)~
\le~
\frac{2}{V(\lambda^{-\frac{1}{2}} \cdot \pi/4)}
\,,
\]
and thus using the hypothesis we find 
\[
p_t(x, x)~
\le~
2t\left(\frac{1}{c_1}\int_0^{\infty}
  \exp\left(-\lambda t-c_3^{3/2} \lambda^{-1/2}\right) \,d\lambda~
+~
\frac{4^d}{c_0 \pi^d} \int_{\frac{\pi^2}{16r_0^2}}^\infty e^{-\lambda t} \lambda^{d/2}\,d\lambda\right)
\,.
\]
We may express the second integral above as 
$t^{-1-d/2} \cdot\Gamma\bigl(1+d/2, \pi^2 t/16r_0^2\bigr)$.
For the first integral, define $\widetilde\lambda= \widetilde\lambda(t) := c_3 \cdot t^{-2/3}$.
Using 
\[
\lambda t + c_3^{3/2}\lambda^{-1/2}~
\geq~
\begin{cases}
\lambda t &\textrm{if } \lambda \ge \widetilde\lambda,\\ 
c_3^{3/2}\lambda^{-1/2} &\textrm{if } \lambda \ge \widetilde\lambda,
\end{cases}
\]
we obtain
\begin{align*}
\int_0^{\infty}  e^{-\lambda t-c_3^{3/2} \lambda^{-1/2}} \,d\lambda~
&\leq~
\int_0^{\widetilde\lambda} e^{-c_3^{3/2}\lambda^{-1/2}} \,d\lambda
+ 
\int_{\widetilde\lambda}^\infty  e^{-\lambda t} \,d\lambda
\\ &\le~ 
\int_{\widetilde\lambda^{-1/2}}^{\infty} 2s^{-3} e^{-c_3^{3/2}s} ds
+   e^{-\widetilde\lambda t}/t
\\ &\le~ 
\frac{2 c_3^{3/2} + 1 }{t} \cdot e^{-c_3 t^{1/3}}
\,.
\end{align*}
Putting together these bounds gives the first desired result. The second bound is proved
in a similar manner.
\end{proof}

\begin{remk}
For compact homogeneous $M$, we also have a bound that expresses both polynomial decay
at short times and exponential decay at large times: with the hypotheses and notation
of Theorem \ref{thm:polygrowth},
\[
p_t(x, x)
~\le~ 
\frac1{\vol M} + \frac{2^{\frac{\beta}{2}} }{c \cdot m_\beta} \cdot
t^{-\frac{\beta}{2}} \cdot \Gamma(\beta/2+1, \lambda_* t)  \,.
\]
\end{remk}

\begin{remk} \label{rem:heat-bounds-spectral}
Upper bounds on the heat kernel likewise give upper bounds on $\ct_x$.
For example, using (\ref{eqn:heat-kernel}), we find that
\[
 p_t(x, x) \geq~ e^{-\lambda \cdot t} \int_0^{\lambda}
 d\ct_x(\nu)~
  =~ e^{-\lambda \cdot t} \cdot \ct_x(\lambda)\,,
\]
and hence by setting $t:= 1/\lambda$ we obtain 
\[
\ct_x(\lambda)~ 
\le~
e\cdot p_{1/\lambda}(x, x)
\]
valid for each $\lambda>0$. For a similar result, see \cite[Proposition 5.3]{EfShu}.
Because of this general inequality, previously known bounds on the heat kernel yield
bounds on the spectral function, though with constants that may not be explicit.
\end{remk}


\section{A comparison with the method of Li} \label{sec:misc}

Our geometric method requires only the $L^2$ bound \eqref{eq:gradbound} on
the gradient, rather than an $L^\infty$ bound.  Li \cite{Li} used an
$L^\infty$ bound, which, in our
language, is the following:

\begin{prop} \label{prop:Ligrad}
If $(M,g)$ is a Riemannian homogeneous space,
then for each $\lambda \geq 0$ and each $(x, y) \in M \times M$,
\begin{equation}  \label{eq:Ligrad}
|(\nabla F_x^{\lambda})(y)|^2 + \lambda \cdot F_x^{\lambda}(y)^2~
\le~
\lambda \cdot F_x^{\lambda}(x)^2
\,.
\end{equation}
\end{prop}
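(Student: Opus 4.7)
The plan is to establish the bound via an orthogonality trick that reduces it to a pointwise gradient estimate on $\Hcal_\lambda$. First, I would record two consequences of homogeneity. By Corollary~\ref{coro:diagonal-constant}, $F_y^\lambda(y) = e_\lambda(y,y)$ is independent of $y$; denote this common value by $c_\lambda$. Differentiating the constant function $y \mapsto e_\lambda(y,y)$ and using the symmetry of the spectral function to equate the first-slot and second-slot gradients on the diagonal will yield $(\nabla F_y^\lambda)(y) = 0$.

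Next I would introduce the auxiliary function
\[
h ~:=~ F_x^\lambda ~-~ \frac{F_x^\lambda(y)}{c_\lambda}\cdot F_y^\lambda ~\in~ \Hcal_\lambda,
\]
which is the orthogonal projection of $F_x^\lambda$ onto $\{F_y^\lambda\}^\perp$ inside $\Hcal_\lambda$. Using the reproducing identity $F_x^\lambda(y) = \iprod{F_x^\lambda, F_y^\lambda}$ from \eqref{eqn:kernel-identity} together with $\|F_y^\lambda\|^2 = c_\lambda$, a short calculation gives
\[
\|h\|^2 ~=~ c_\lambda ~-~ \frac{F_x^\lambda(y)^2}{c_\lambda} ~=~ \frac{F_x^\lambda(x)^2 - F_x^\lambda(y)^2}{F_x^\lambda(x)}.
\]
Since $(\nabla F_y^\lambda)(y) = 0$, we also have $(\nabla h)(y) = (\nabla F_x^\lambda)(y)$.

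The crux will be the pointwise gradient bound
\[
|(\nabla h)(y)|^2 ~\le~ \lambda\cdot c_\lambda \cdot \|h\|^2 \qquad \text{for every } h \in \Hcal_\lambda \text{ and } y \in M.
\]
In the compact case, fix an orthonormal eigenbasis $\Bcal_\lambda$ of $\Hcal_\lambda$, with eigenvalues $\nu_b \le \lambda$. Expanding $h = \sum_b \iprod{h,b}\,b$ and applying the triangle and Cauchy--Schwarz inequalities in $T_y M$ to $(\nabla h)(y) = \sum_b \iprod{h,b}\,\nabla b(y)$ gives $|(\nabla h)(y)|^2 \le \|h\|^2\cdot \sum_b |\nabla b(y)|^2$. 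By homogeneity, $y \mapsto \sum_b |\nabla b(y)|^2$ is constant, so it equals its average $\vol(M)^{-1}\sum_b \int_M |\nabla b|^2\,d\mu = \vol(M)^{-1}\sum_b \nu_b$. Since $\nu_b \le \lambda$ and $\ct(\lambda) = \vol(M)\cdot c_\lambda$ by \eqref{eq:int-spec}, this average is at most $\lambda c_\lambda$, yielding the gradient bound.

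Combining the previous pieces gives
\[
|(\nabla F_x^\lambda)(y)|^2 ~=~ |(\nabla h)(y)|^2 ~\le~ \lambda\cdot c_\lambda \cdot \|h\|^2 ~=~ \lambda\cdot F_x^\lambda(x)^2 ~-~ \lambda\cdot F_x^\lambda(y)^2,
\]
which rearranges to the claim. For a general (possibly noncompact) homogeneous space the same outline applies, but the sum over an eigenbasis must be replaced by a spectral-measure argument. The hard part will be establishing the pointwise gradient bound on $\Hcal_\lambda$ without recourse to a basis: this amounts to proving that the symmetric bilinear form $(v,w)\mapsto \iprod{\phi_v^y,\phi_w^y}$ on $T_yM$, where $\phi_v^y \in \Hcal_\lambda$ is the Riesz representer of the continuous functional $h \mapsto \mg(v,\nabla h(y))$, has trace at most $\lambda c_\lambda$, which can be deduced by combining the homogeneity-invariance of this trace with Proposition~\ref{coro:gradient-est-noncompact}.
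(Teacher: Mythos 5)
Your plan is correct and, at its core, it is the paper's argument: decompose the kernel orthogonally against the reproducing kernel at the other point, use the vanishing of the gradient on the diagonal, and reduce to a pointwise gradient bound for elements of $\Hcal_\lambda$. The one genuine streamlining is that you skip the preliminary isometry transport: the paper chooses $\iota$ with $\iota(y)=x$, sets $z:=\iota(x)$, uses Lemma \ref{lem:e-invariance} to get $|\nabla F_x(y)|=|\nabla F_z(x)|$ and $F_z(x)=F_x(y)$, and then decomposes $F_z = aF_x + h$ with $h\perp F_x$ at the point $x$; you instead decompose $F_x = \tfrac{F_x(y)}{c_\lambda}F_y + h$ with $h\perp F_y$ and work directly at $y$, which requires only Corollary \ref{coro:diagonal-constant} (for $\nabla F_y(y)=0$) and the reproducing identity. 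Both routes then hinge on exactly the same inequality $|\nabla h(y)|^2\le \lambda\,\norm{h}^2\,\norm{F_y}^2$ for $h\in\Hcal_\lambda$; you re-derive it in the compact case via an eigenbasis and an averaging-by-homogeneity step, which is a valid but roundabout way of reproducing the combination of Lemma \ref{lem:gradhbound} with \eqref{eq:gradbound}. For the general (noncompact) case your last paragraph stops at a sketch — but the bilinear-form/Riesz-representer bound you describe there is precisely Lemma \ref{lem:gradhbound}, which together with Proposition \ref{coro:gradient-est-noncompact} gives $|\nabla h(y)|\le \norm{h}\cdot\norm{|\nabla_y F_y|}\le \sqrt\lambda\,\norm{h}\,\norm{F_y}$. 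Citing that lemma closes the noncompact gap and would also let you drop the compact-only eigenbasis computation.
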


\cite{Li} considered only compact spaces, but this bound holds more generally.
As it might be useful in other contexts, we provide a short proof.

\begin{proof}
We follow the method of \cite[Theorem 5]{Li}. 
Since $M$ is homogeneous, there exists an isometry $\iota\colon M \to M$ 
so that $\iota(y)=x$.  By Lemma \ref{lem:e-invariance}, we have
$F_x \circ \iota^{-1}(x)=F_x(y)=  F_z(x)$ where $z:=\iota(x)$, and moreover, 
\begin{equation}
\label{eqn:diagonal-consequence}
|(\nabla F_x)(y)|~ 
=~ 
|(\nabla (F_x \circ \iota^{-1}))(x)|~ 
=~ 
|(\nabla F_z)(x)|
\, .
\end{equation}

Homogeneity and (\ref{est:F}) also imply that $F_x(y) \leq F_x(x)$
for each $y$. Therefore, $F_x$ achieves a maximum at $x$ and 
hence $(\nabla F_x)(x) = 0$. Write 
\[
F_z~ =~ a F_x\,  +\,  h \ \mbox{ with } h \perp F_x
\,.
\]
Since $h \in \Hcal_\lambda$, we have
$h(x) = \iprod{h, F_x} = 0$. Thus, $F_z(x) = a F_x(x)$, and
$(\nabla F_z)(x) = (\nabla h)(x)$. Therefore, using 
(\ref{eqn:diagonal-consequence}), we have
\begin{equation}
\label{eq:first}
|(\nabla F_x)(y)|^2 + \lambda F_x(y)^2
~=~
|(\nabla F_z)(x)|^2 + \lambda F_z(x)^2
~=~
|\nabla h(x)|^2 + \lambda a^2 F_x(x)^2
\,.
\end{equation}

Lemma \ref{lem:gradhbound} (below)
and \eqref{eq:gradbound} (or Corollary \ref{coro:gradient-est-noncompact})
together imply that
\[
|\nabla h(x)|^2~
\le~
\lambda \cdot \norm{h}^2 \cdot  \norm{F_x}^2.
\]
Since $\norm{h}^2 = \norm{F_z}^2 - a^2 \norm{F_x}^2 = (1 - a^2)
\norm{F_x}^2$, we deduce that
\[
|\nabla h(x)|^2~
\le~
\lambda \cdot (1 - a^2) \cdot \norm{F_x}^4~
=~
\lambda \cdot (1 - a^2) \cdot F_x(x)^2
\,.
\]
By combining this with \eqref{eq:first}, we obtain \eqref{eq:Ligrad}.
\end{proof}

\begin{lem}
\label{lem:gradhbound}
If (M, g) is a complete Riemannian manifold and $\lambda > 0$, then
for each $h \in \Hcal_\lambda$ and each $x \in M$, we have
$$
|(\nabla h)(x)|
~\le~
\norm{h} \cdot \norm{|\nabla_x F_x^\lambda|}
\,.
$$ 
\end{lem}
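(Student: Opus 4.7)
The plan is to exploit the reproducing-kernel property of $F_y^\lambda$ for $\Hcal_\lambda$, differentiate through the integral, and use the symmetry $e_\lambda(x,z)=e_\lambda(z,x)$ together with Cauchy--Schwarz.

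First, for $h\in\Hcal_\lambda$ the identity $h(y)=\iprod{h,F_y^\lambda}$ recorded in the proof of Proposition \ref{prop:F-properties} gives, using Proposition \ref{prop:F-properties} itself,
\[
h(y)~=~\int_M h(z)\,F_y^\lambda(z)\,d\mu(z)~=~\int_M h(z)\,F_z^\lambda(y)\,d\mu(z)\,.
\]
Next I would differentiate in $y$ under the integral sign and evaluate at $y=x$ to obtain
\[
(\nabla h)(x)~=~\int_M h(z)\,(\nabla F_z^\lambda)(x)\,d\mu(z)\qquad\text{in }T_xM\,.
\]

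Now I would apply Cauchy--Schwarz. For any $v\in T_xM$ of unit length,
\[
g\bigl(v,(\nabla h)(x)\bigr)~=~\int_M h(z)\,g\bigl(v,(\nabla F_z^\lambda)(x)\bigr)\,d\mu(z)\,,
\]
so by Cauchy--Schwarz in $L^2(M,\mu)$,
\[
\bigl|g\bigl(v,(\nabla h)(x)\bigr)\bigr|~\le~\norm{h}\cdot\left(\int_M |(\nabla F_z^\lambda)(x)|^2\,d\mu(z)\right)^{1/2}\,.
\]
Taking the supremum over unit $v$ gives $|(\nabla h)(x)|$ on the left-hand side. It remains to identify the integral on the right. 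Here the key point is that the symmetry $e_\lambda(a,b)=e_\lambda(b,a)$ implies the pointwise identity
\[
|(\nabla F_z^\lambda)(x)|~=~|\nabla_x F_x^\lambda(z)|\,,
\]
both sides being the length in $T_xM$ of the derivative of $e_\lambda$ in the $x$-slot evaluated at $(x,z)$. Integrating over $z$ therefore yields exactly $\norm{|\nabla_x F_x^\lambda|}^2$, and the desired inequality follows.

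The main technical point to be careful about is the justification of differentiation under the integral sign; once granted, the rest is Cauchy--Schwarz plus the symmetry of $e_\lambda$. Smoothness of $e_\lambda$ (Theorem \ref{lem:kernel}) handles pointwise differentiability, and the square-integrability of $z\mapsto|\nabla_x F_x^\lambda(z)|$ needed to apply Cauchy--Schwarz is exactly Proposition \ref{prop:gradient-estimate} from the Appendix, used inside Lemma \ref{lem:Laplacian-gradient}. This suffices both to differentiate past the integral (together with the compact support of appropriate approximations) and to finish the estimate.
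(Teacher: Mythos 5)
Your plan is essentially the paper's proof: start from the reproducing-kernel identity $h(x)=\iprod{h,F_x^\lambda}$, pass the gradient through the integral, and finish with Cauchy--Schwarz. Your double use of the symmetry $e_\lambda(a,b)=e_\lambda(b,a)$ --- first to rewrite $F_y(z)$ as $F_z(y)$, then to identify $|(\nabla F_z^\lambda)(x)|$ with $|\nabla_x F_x^\lambda(z)|$ --- is equivalent to the paper's one-step differentiation $\nabla h(x)=\int_M h(y)\,\nabla_x F_x(y)\,d\mu(y)$, so the computational content matches.

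The one place your plan is thinner than the paper is the justification of differentiating under the integral sign when $M$ is noncompact, which is exactly where the paper's proof spends most of its effort. Square-integrability of $z\mapsto|\nabla_x F_x^\lambda(z)|$ at a single $x$ (Proposition \ref{prop:gradient-estimate}) is what makes Cauchy--Schwarz legal, but by itself it does not let you move $\nabla$ past the integral: you need control that is locally uniform in the base point. The paper gets this by observing that $x\mapsto|\nabla_x F_x(y)|$ is continuous (Theorem \ref{lem:kernel}) and invoking Fatou's lemma to bound $\norm{|\nabla_x F_x|}$ uniformly for $x$ in a compact coordinate patch $U$, which gives square-integrability of $|\nabla_x F_x(y)|$ jointly over $(x,y)\in U\times M$; it then derives $(\partial^i h)(x)=\iprod{h,\partial^i_x F_x}$ by a Fubini/integration-by-parts argument against test functions supported in $U$. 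Your phrase ``together with the compact support of appropriate approximations'' points in the right direction but is not itself an argument; the joint-square-integrability step is the ingredient you would need to make it precise.
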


\begin{proof}
Since $h$ lies in $\Hcal_{\lambda}$, we have $h(x) = \iprod{h, F_x}$.
In the compact case, this immediately yields
\[
|\nabla h(x)|
~=~
\Big|\int h(y) \nabla_x {F_x(y)} \,d\mu(y)\Big|
~\le~
\int |h(y)|\cdot|\nabla_x F_x(y)| \,d\mu(y)
~\le~
\norm{h} \cdot \norm{|\nabla_x F_x|}
\]
by the Cauchy--Schwarz inequality.

The general case requires a more elaborate argument. 
Let $U$ be a normal coordinate neighborhood about $x$ 
with compact closure, and let 
$\partial^1, \ldots, \partial^d$ be the associated coordinate vector fields. 
By Proposition \ref{prop:gradient-estimate}, the function 
$y \mapsto |\nabla_x F_x|$ lies in $\Hcal$. 
 In addition, $x \mapsto |\nabla_x F_x(y)|$ is continuous by Theorem \ref{lem:kernel}. Fatou's lemma thus ensures that $\norm{|\nabla_x F_x|}$ is bounded for $x \in U$, whence $|\nabla_x F_x(y)|$ is square-integrable over $(x, y) \in U \times M$.
Thus, for each smooth function $u\colon M \to \Rbb$ 
with support in $U$, integration by parts gives  
\begin{multline*} 
 \int_{M} u(x) \cdot \partial^i h(x) \,d\mu(x)~
  ~=~ -\int_{M} \left( \partial^i_x u(x)\right) \cdot 
  h(x) \,  d\mu(x)\\ 
  -~ \int_M  {\rm div}(\partial^i)(x) \cdot u(x) \cdot h(x)\, d \mu(x)\,,
\end{multline*}
where ${\rm div}$ is the divergence operator associated to the Riemannian metric, $g$.
Using Fubini's theorem, integration by parts, and Fubini's theorem again, we find that the first term on the right-hand side equals
\begin{multline*}   
 -\int_{M} \int_M \left(\partial^i_x u(x) \right)
   \cdot  h(y) \cdot F_x(y) \, d \mu(y)\,  d\mu(x) \\
 = \int_{M} u(x) \int_M 
   h(y) \cdot  \partial^i_x F_x(y) \, d \mu(y) \,  d\mu(x)\, \\
  +\int_{M} u(x) \cdot {\rm div}(\partial^i)(x) \int_M 
   h(y) \cdot F_x(y) \, d \mu(y) \,  d\mu(x)\,.
\end{multline*}
Combining the last two displays, we obtain
\[
 \int_{M} u(x) \cdot \partial^i h(x) \,d\mu(x)~
=
\int_{M} u(x) \int_M 
   h(y) \cdot  \partial^i_x F_x(y) \, d \mu(y) \,  d\mu(x)\,.
\]
It follows that for each $i$, we have 
 $(\partial^i h)(x)= \langle h, \partial^i_x F_x \rangle$,
and hence $\nabla h(x)
=
\int h(y) \nabla_x {F_x(y)} \,d\mu(y)$. Now the argument may be completed as in the compact case.
\end{proof}


\section{Examples} \label{sec:examples}

In this section, we compare our estimates to exact formulas in some cases where 
the spectral functions are known explicitly. 

\begin{eg}[Euclidean space]
Using the Fourier inversion formula, one finds that 
the spectral function for the Laplacian on $d$-dimensional Euclidean space 
is given by 
\[ e_{\lambda}(x,y)~ 
=~ (2 \pi)^{-d} \int_{0}^{\sqrt{\lambda}}  \left( \int_{\Sbb^{d-1}}
\cos\bigl((x-y) \cdot \rho \cdot \omega\bigr) \cdot  d\sigma(\omega) \right) \,  
 \rho^{d-1} \cdot d\rho \,,
\]
where $\Sbb^{d-1} \subset \Rbb^d$ is the sphere of radius $1$ 
and $d \sigma$ is the measure on $\Sbb^{d-1}$ 
induced from Lebesgue measure on $\Rbb^d$.
In particular, by setting $x=y$, we find that
\[ \ct_x(\lambda)~
=~ \frac{\omega_d}{(2\pi)^d} \cdot \lambda^{\frac{d}{2}}. 
\]
In comparison, Theorem \ref{thm:homogeneous-noncompact} gives 
\[  \ct_{x}(\lambda)~ 
\leq~  \frac{1}{\omega_d \int_{0}^{\pi/2} \theta^d \cdot \sin(2 \theta)\, d \theta} 
\cdot \lambda^{\frac{d}{2}}. 
\]
(Another representation of the full spectral function is 
\[
e_\lambda(x, y) ~=~ (2\pi r/\sqrt\lambda)^{-d/2}J_{d/2}(r\sqrt\lambda)
\,,
\]
where $J_k$ is the Bessel function of the first kind of order $k$ and $r := \|x - y\|$.)
\end{eg}

\begin{eg}[Spheres] \label{eg:spheres}
The eigenvalues for the Laplacian associated to the sphere $\Sbb^d$ 
of constant curvature $+1$ can be explicitly computed.  In particular, the $k^{{\rm th}}$ distinct nonzero eigenvalue is 
$\phi(k) := k \cdot (k +d-1)$ and 
\[ N\bigl(\phi(k)\bigr)~ =~ \binom{k+d-1}{d}~ +~  \binom{k+d}{d}\,.  \]
In the special case when $d=2$, we find that $N\bigl(\phi(k)\bigr)= (k+1)^2$, and so
\[  \ct(\lambda)~ 
=~  \lambda + \frac{1}{2} \cdot \left(1 + \sqrt{1+4 \lambda} \right)
 \]
when $\lambda$ is an eigenvalue of the Laplacian for $\Sbb^2$.

For $r\leq \pi$, the injectivity radius of $\Sbb^d$, we have 
\begin{equation} \label{eq:sphere-volume}
  V(r)~ =~ \vol(\Sbb^{d-1}) \cdot \int_0^r (\sin x)^{d-1}\, d  x\,.
\end{equation}
Thus, after a straightforward calculation we find that when $d=2$,
Theorem \ref{thm:transitiveeig} gives the estimate
\[  \ct(\lambda)~ 
\leq~ \frac{8 \lambda -2}{4 \lambda \cdot \sin^2\bigl(\pi/(4\sqrt{\lambda})\bigr) -1}  \]
for $\lambda >1/4$. See Figure \ref{fig:sphere-comparison}.

In comparison, in the case of $\Sbb^2$, 
Theorem 18 of \cite{Li-Yau} gives that
if $\lambda\geq 6$, then $\ct(\lambda) \leq 3 \cdot 
2^9 \cdot e^2 \cdot \lambda$, and 
Theorem 25 of \cite{Li-Yau} gives that
$\ct(\lambda) \leq 3 \cdot 2^{18}\cdot \pi^{-3} 
\cdot \exp(\frac{1}{3} \pi^2) \cdot \lambda$ for 
$\lambda\geq 2$. Note that Theorem 12 of \cite{Li-Yau} gives
that $\lambda_1 > 1/4$, Corollary 8 of \cite{Li}
and Remark \ref{remk:antipode} above each give $\lambda_1 \ge 1$,
whereas, in fact, $\lambda_1=2$.
\begin{figure} 
\centering
\includegraphics[width=4truein]{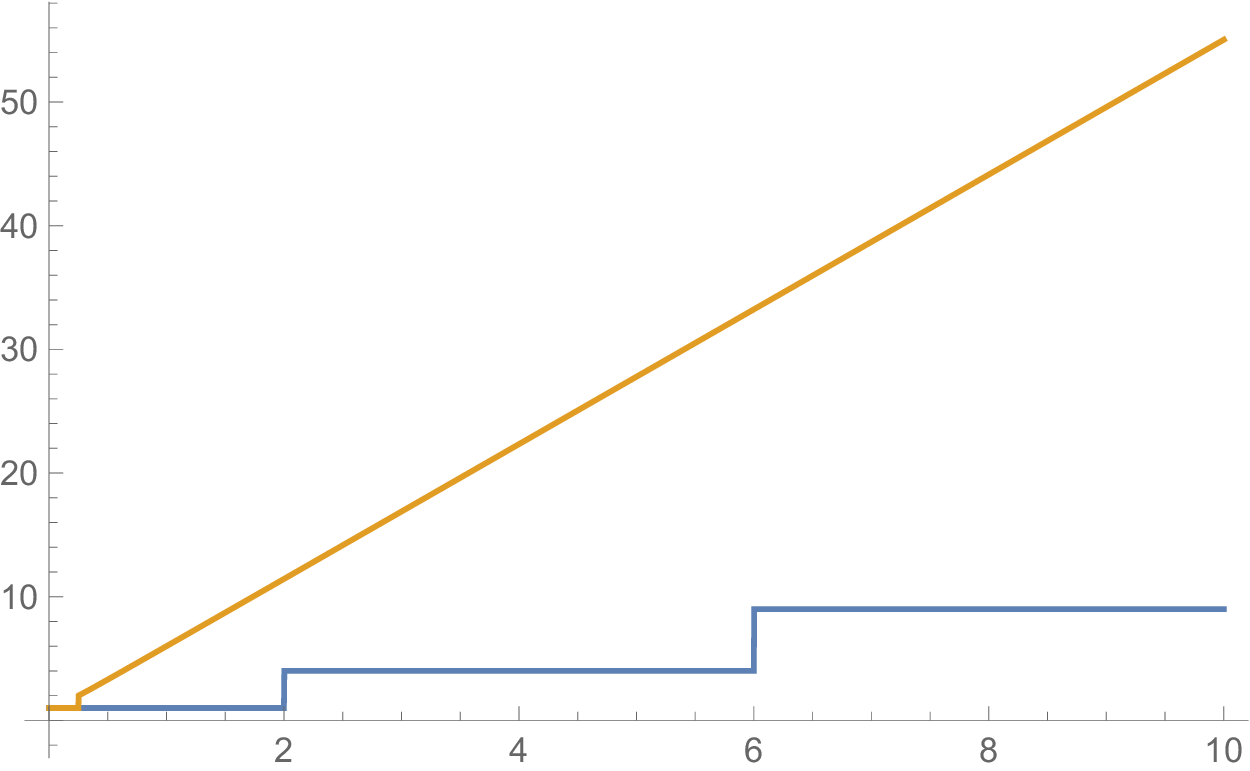}
\caption{The estimate of the counting function for $\Sbb^2$.
The piecewise constant function is $\ct(x)$ and the other function
is given by Theorem  \ref{thm:transitiveeig} and Remark \ref{remk:antipode}.
  \label{fig:sphere-comparison}}
\end{figure}
\end{eg}

\begin{eg}[Hyperbolic spaces]
Because the heat kernel is the Laplace transform of the 
spectral measure---see \eqref{eq:laplace}---the spectral function 
for a noncompact 
symmetric space $X$ can sometimes be determined if one has 
an exact expression for its heat kernel.\footnote{One can also
apply the inverse Fourier transform to an exact expression for the wave kernel. See \cite{LaxPhillips} 
for an explicit expression for the wave kernel of real 
hyperbolic space.}
Note that if  $X$ is a rank-one symmetric space, then 
$e_{\lambda}(x,y)$ depends only on the distance $r:=r(x,y)$
between $x$ and $y$.\footnote{Indeed, rank-one symmetric spaces
are ``two-point homogeneous": If $\dist(x,y)=\dist(x',y')$, 
then there exists an isometry
$\iota\colon M \to M$ with $\iota(x)=x'$ and $\iota(y)=y'$ \cite{Helgason}.
Thus, Lemma \ref{lem:e-invariance} implies that $e_{\lambda}$ 
depends only on $r$.}

For example, if $d\geq 3$ is odd, then, according to \cite{Anker}, 
the heat kernel for real hyperbolic space $\Hbb^d(\Rbb)$ is
\begin{equation} \label{eqn:hyperbolic-heat}
p_t(x,y)~
=~
\frac{\sqrt{\pi}}{(2 \pi)^{\frac{d+1}{2} }}
\cdot
\frac{ \exp\left(-b_d \cdot t\right) }{ \sqrt{t} }
\cdot 
\left( \frac{-1}{\sinh(r)} \partial_r \right)^{\frac{d-1}{2}}
\exp\left( \frac{-r^2}{4t} \right)\,,
\end{equation}
where $b_d := (d-1)^2/4$ is the bottom of the spectrum. 
We claim that if $r=\dist(x,y)>0$, then 
\begin{equation} \label{eqn:hyperbolic-spec}
e_\lambda(x, y)~ =~  \begin{cases}
\displaystyle
\frac{2}{(2 \pi)^{\frac{d+1}{2}} }
     \left( \frac{-1}{\sinh(r)} \partial_r \right)^{\frac{d-1}{2}} 
   \frac{\sin(r \cdot \sqrt{\lambda-b_d})}{r} &\text{if } \lambda \ge b_d,\\[3ex]
   0 &\text{otherwise}. 
   \end{cases}
\end{equation}
Indeed, by taking the Laplace transform of $de_{\lambda}$, making the change 
of variable $\lambda= \nu+b_d$,  and applying the identity
\[
\int_0^\infty e^{-\nu \cdot t} 
\,
d_{\nu}\left(\frac{\sin(r \sqrt{\nu})}{r} \right)
~=~
\frac{\sqrt\pi}{2\sqrt t} \cdot \exp\left( \frac{-r^2}{4t}\right)\,,
\]
we obtain (\ref{eqn:hyperbolic-heat}) from \eqref{eqn:hyperbolic-spec}. 
Thus, \eqref{eqn:hyperbolic-spec} follows from the fact that 
the Laplace transform is injective on bounded functions.

One obtains an  explicit expression for $N_{x}(\lambda)$
from  \eqref{eqn:hyperbolic-spec} by differentiating and 
taking the limit as $r$ tends to zero. 
For example, the values of $\ct_x(\lambda)$ for $d = 3, 5, 7$ 
and $\lambda \ge b_d$
are
%
\[
\frac{(\lambda -1)^{3/2}}{6 \pi ^2}\,,\ \frac{(3 (\lambda -4)+5) (\lambda -4)^{3/2}}{180 \pi
   ^3}\,,\ \frac{\left(3 (\lambda -9)^2+21 (\lambda -9)+28\right) (\lambda -9)^{3/2}}{2520 \pi
   ^4}\,.
\]
%

For even-dimensional real hyperbolic spaces, 
a similar analysis using the explicit formula 
for the heat kernel \cite{Anker} gives
\[
e_\lambda(x, y)
~=~
\frac{2\sqrt 2}{(2\pi)^{(d+2)/2}} \int_r^\infty 
     \frac{\sinh s}{\sqrt{\cosh s -\cosh r}} \cdot 
     \left( \frac{-1}{\sinh s} \partial_s \right)^{\frac{d}{2}} 
     \frac{\sin (s \cdot \sqrt{\lambda - b_d})}{s}\,ds\,.
\]
It is convenient to write 
\[
\frac{\sin(s\cdot \sqrt{\lambda - b_d})}{s}
=
\int_0^{\sqrt{\lambda - b_d}} \cos(s \cdot a) \,da
\]
in the preceding integral. Using Fubini's theorem, we then obtain
for $d = 2$ and $\lambda \ge 1/4$ that 
\[
\ct_x(\lambda)
~=~
\int_0^{\sqrt{\lambda - 1/4}} \frac{a \tanh (\pi  a)}{2 \pi } \,da\,;
\]
when $d = 4$, we obtain for $\lambda \ge 9/4$ that
\[
\ct_x(\lambda)
~=~
\int_0^{\sqrt{\lambda - 9/4}} \frac{ \left(4 a^3+a\right) \tanh (\pi  a) }{32 \pi ^2 } \,da\,;
\]
and when $d = 6$, we obtain for $\lambda \ge 25/4$ that
\[
\ct_x(\lambda)
~=~
\int_0^{\sqrt{\lambda - 25/4}} \frac{\left(16 a^5+40 a^3+9a\right) \tanh (\pi  a)}{1024 \pi ^3} \,da\,.
\]

The volume of the ball of radius $r$
in $\Hbb^d(\R)$ is given by 
\[ V(r)~ =~  \vol(\Sbb^{d-1}) \int_{0}^r (\sinh x)^{d-1}\, dx\,. \]
In particular, if $d=2$, 
then Theorem \ref{thm:homogeneous-noncompact}
yields the estimate
\[ \ct_x(\lambda)~ 
\leq~  
\frac{4 \lambda +1}{2 \pi  \bigl(4 \lambda  \sinh^2 \bigl(\frac{\pi }{4 \sqrt{\lambda
   }}\bigr)-1\bigr)}\,.
\]
See Figure \ref{fig:hyperbolic-comparison}.

\begin{figure}

\centering
\includegraphics[width=4truein]{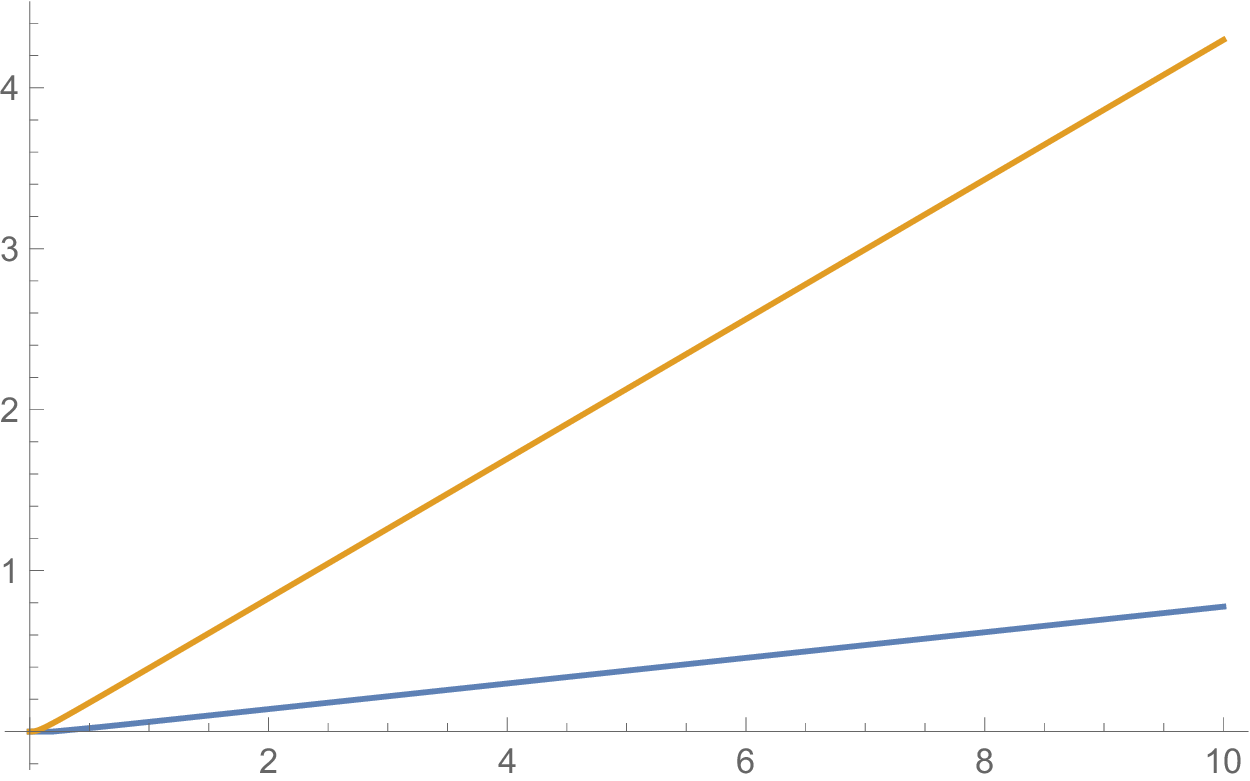}

\caption{The lower curve is $\ct_{x}$ for $\Hbb^2(\R)$ and 
the upper curve is the estimate given by Theorem
 \ref{thm:homogeneous-noncompact}.  
\label{fig:hyperbolic-comparison}}

\end{figure}

\end{eg}

Because we could not find the spectral functions for complex hyperbolic spaces in the literature, we give them here as well. 
For $M = \Hbb^d(\C)$ with $d \ge 2$, using the Laplace transform 
and the formula for the heat kernel \cite{Anker}, we obtain that
for $\lambda \ge d^2$ and $\dist(x, y) = r$, 
\[
e_\lambda(x, y) 
~=~
\frac{4\sqrt 2}{(2\pi)^{d+1}} \int_r^\infty 
     \frac{\sinh s}{\sqrt{\cosh 2s -\cosh 2r}} \cdot 
     \left( \frac{-1}{\sinh s} \partial_s \right)^{d} 
     \frac{\sin (s \cdot \sqrt{\lambda - d^2})}{s}\,ds\,.
\]
For $d = 2$, this yields
for $\lambda \ge 4$ that
\[
\ct_x(\lambda)
~=~
\int_0^{\sqrt{\lambda - 4}} \frac{a^3 \coth (\pi  a/2)}{8 \pi^2 } \,da\,;
\]
for $d = 3$, this yields
for $\lambda \ge 9$ that
\[
\ct_x(\lambda)
~=~
\int_0^{\sqrt{\lambda - 9}} \frac{a(a^2+1)^2 \tanh (\pi  a/2)}{64 \pi^3 } \,da\,;
\]
and for $d = 4$, this yields
for $\lambda \ge 16$ that
\[
\ct_x(\lambda)
~=~
\int_0^{\sqrt{\lambda - 16}} \frac{a^3(a^2+4)^2 \coth (\pi  a/2)}{768 \pi^4 } \,da\,.
\]

\appendix
\section{Integral kernels and the functional calculus}
\label{sec:appendix}

It is known that the spectral projection 
$E_\lambda$ has a smooth kernel in the case of noncompact 
manifolds; see, for example, \cite{Hormander66}. However, 
we found it difficult to find 
a complete proof in the literature, 
and so we provide a proof here as a courtesy to the reader. 
We also prove some other facts about the kernel that we use.

The proof of smoothness relies on the standard theory of elliptic partial differential equations. If $\Delta^k u =v$, then since $\Delta$ is self-adjoint, 
we have, for each compactly supported smooth function $\psi$,
\begin{equation} \label{eq:distribution}
 \langle \Delta^k \psi, u\rangle~ =~  \langle  \psi, v \rangle
 \,.
\end{equation}
That is, if we regard $u$ and $v$ as distributions,  
then $L^k u = v$, where $L$ is the Laplacian acting on distributions.
Here, $L^k$ is an elliptic differential operator of order $2k$.
Our assumption $\Delta^k u =v$ involves a function class $u \in \Hcal = L^2(M, \mu)$,
but we will need to show that there is a function in the class of $u$ that
has continuous derivatives.
Thus, we explain next a few facts about Sobolev spaces that we need, which is a
theory of distributions.

Let $U \subset M$ be a precompact open set whose closure 
lies in an open set diffeomorphic to $\Rbb^d$. A function (class) $v \in
\Hcal$, regarded as a distribution restricted to $U$,
lies in the Sobolev space $H^0(U)$. If $u$ is a distribution with $L^k u =v$, then 
the standard theory of elliptic equations implies that 
$u$ lies in the Sobolev space $H^{2k}(U)$.\footnote{See, 
for example, the proof of Theorem 1 in Chapter 4 of \cite{BJS}.}
Moreover, there exists a constant $C=C(L,U,k)$ 
so that 
\begin{equation} \label{eq:elliptic-estimate}
    \|u \|_{H^{2k}(U)}~ \leq~ 
    C \cdot \bigl( \|v\| + \|u\| \bigr)\,.  
\end{equation}

Choose coordinates $(x_{1}, x_2, \ldots, x_d)$ on $U$,
and for each multi-index $\alpha \in \Nbb^d$, let 
$\partial^{\alpha}$ denote the mixed partial derivative
$\partial^{\alpha_1}_{x_1} \cdots \partial^{\alpha_d}_{x_d}$.
For each $u\colon U \rightarrow \Rbb$ such that 
$\partial^{\alpha} u$ is continuous and bounded for each 
$\alpha$ with $|\alpha|:=\sum_{} \alpha_i \leq j$, define  
\begin{equation} \label{eq:Sobolev}
 \|u\|_{C^j(U)}~ =~
    \sup_{U}\, |u|_{C^j}\,,
\end{equation}
where
\begin{equation*} 
|u|_{C^j}(x)~ :=~
   \sup_{|\alpha|\leq j}\,   \left|\partial^{\alpha} u(x) \right|\,. 
\end{equation*}
The Sobolev embedding theorem provides
constants $C'=C'(U,k,j)$ so that if $u \in H^{2k}(U)$ 
and $2k > d/2 +j+1$, then the distribution $u $ is represented
by a function with a continuous $j^{{\rm th}}$ derivative, which we also denote by $u$, and 
\[    \|u \|_{C^j(U)}~ \le~ C' \cdot \| u \|_{H^{2k}(U)}\,.  \]

Recall that $E$ denotes the spectral resolution of $\Delta$.

\begin{thm} \label{lem:kernel}
Let $a \colon [0, \infty) \to \R$ be a nonnegative Borel function
such that for each nonnegative integer $k$, the map 
$\nu \mapsto a(\nu) \cdot \nu^k$ is bounded
on $[0, \infty)$.  Then the operator $a(\Delta)$ defined by 
\[   a(\Delta)~ =~ \int_0^{\infty} a(\nu)~ dE_{\nu} 
\] 
maps each element of $\Hcal$ to a smooth function, and
$a(\Delta)$ has a smooth, symmetric, real-valued integral kernel. 
\end{thm}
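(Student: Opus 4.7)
The plan is to combine the functional calculus with the elliptic regularity estimate \eqref{eq:elliptic-estimate} and the Sobolev embedding. The hypothesis on $a$ ensures that for every $k \ge 0$ the function $\nu \mapsto \nu^k a(\nu)$ is bounded on $[0,\infty)$, and hence $(\nu^k a(\nu))(\Delta)$ is a bounded operator on $\Hcal$. For $u \in \Hcal$, the spectral theorem gives $a(\Delta)u \in \dom(\Delta^k)$ with $\Delta^k(a(\Delta)u) = (\nu^k a(\nu))(\Delta) u$ and $\|\Delta^k a(\Delta)u\| \le M_k\,\|u\|$, where $M_k := \sup_\nu |\nu^k a(\nu)|$. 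Iterating \eqref{eq:elliptic-estimate} on any precompact open $U$ lying in a coordinate chart yields $\|a(\Delta)u\|_{H^{2k}(U)} \le C_{k,U}\,\|u\|$, and the Sobolev embedding then promotes this to $\|a(\Delta)u\|_{C^j(U)} \le C'_{j,U}\,\|u\|$ for every $j$, so $a(\Delta)u$ is represented by a smooth function.

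Consequently, for each $x \in M$ and each multi-index $\alpha$, the functional $u \mapsto \partial^\alpha(a(\Delta)u)(x)$ is bounded on $\Hcal$, and the Riesz representation theorem supplies $k_{x,\alpha} \in \Hcal$ with $\partial^\alpha(a(\Delta)u)(x) = \langle u, k_{x,\alpha}\rangle$. Setting $e(x,y) := k_{x,0}(y)$ then gives the integral kernel of $a(\Delta)$. Since $a$ is real and $\Delta$ preserves real-valued functions, so does $a(\Delta)$; varying $u$ over real-valued functions forces $k_{x,0}$---and hence $e$---to be real. Symmetry $e(x,y) = e(y,x)$ follows from self-adjointness of $a(\Delta)$ (a consequence of $a$ being real) via Fubini, together with continuity of $e$ established in what follows.

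For joint $C^\infty$ regularity of $e$ on $M \times M$---the main obstacle---I would factor $a = b \cdot b$ with $b := \sqrt a$; the hypothesis on $a$ transfers to $b$ because $\nu^k b(\nu) = \sqrt{\nu^{2k} a(\nu)}$ is bounded. Applying the preceding steps to $b$ yields a real, symmetric kernel $k(x,y)$ for $b(\Delta)$ that is smooth in each variable separately. Since $a(\Delta) = b(\Delta) \circ b(\Delta)$, Fubini (justified by the local uniform $L^2$ bound on the slices $k_x := k(x,\cdot)$) produces the representation
\[
e(x,z) = \int_M k(x,y)\, k(y,z)\,d\mu(y) = \langle k_x, k_z\rangle_{\Hcal}\,.
\]

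What remains is to verify that $x \mapsto k_x$ is $C^\infty$ as a map $M \to \Hcal$; granting this, the identity $\partial_x^\alpha \partial_z^\beta e(x,z) = \langle \partial_x^\alpha k_x, \partial_z^\beta k_z\rangle_{\Hcal}$ together with Cauchy--Schwarz gives joint continuity of all mixed partials, so $e \in C^\infty(M \times M)$. To obtain the smoothness of $x \mapsto k_x$, I would Taylor expand $b(\Delta)u(x)$ at $x_0$ and observe that the first-paragraph bound $\|b(\Delta)u\|_{C^{j+1}(U)} \le C\,\|u\|$ produces a scalar remainder that is $O(|x-x_0|^{j+1})$ uniformly in $u$ with $\|u\| \le 1$; Riesz duality converts this uniform scalar remainder estimate into an $\Hcal$-norm remainder for the corresponding Taylor expansion of $k_x$, whose coefficients are the representers $k_{x_0,\alpha}$ supplied by the previous paragraph applied with $b$ in place of $a$.
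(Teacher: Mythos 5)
Your proposal is correct and follows essentially the same route as the paper: factor $a = \sqrt a \cdot \sqrt a$, use the elliptic estimate \eqref{eq:elliptic-estimate} and the Sobolev embedding to bound $\sqrt a(\Delta)w$ in $C^j(U)$ uniformly for $\|w\|\le 1$, take Riesz representatives $k_x$ (and $k_x^\alpha$ for the derivatives), and express the kernel as $\langle k_x, k_y\rangle$. The Taylor-expansion argument you add to establish that $x\mapsto k_x$ is smooth as an $\Hcal$-valued map makes explicit a continuity point that the paper's derivation of $\partial_y^\beta\partial_x^\alpha K = \langle k_x^\alpha, k_y^\beta\rangle$ leaves implicit.
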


\begin{proof}
The hypothesis implies that, for each $k$, the operator 
$\Delta^k \circ \sqrt{a}(\Delta)$ maps $\Hcal$ to $\Hcal$. 
Given  $w \in \Hcal$, let $u:=\sqrt{a}(\Delta)(w)$ and $v:= \Delta^k u$.  
Regarding $u$ and $v$ as distributions,
we have $L^k u = v$. Given $x \in M$, let $U \ni x$ be a precompact open set
whose closure lies in an open set diffeomorphic to $\Rbb^d$. 
By the discussion above,
the distribution $u$ lies in $H^{2k}(U)$ and (\ref{eq:elliptic-estimate})
holds, and if we choose $2k> d/2 + j+ 1$, then 
\[ |u|_{C^{j}}(x)~ \leq~ C' \cdot \|u\|_{H^{2k}(U)}~ \le~
   C' \cdot C \cdot \bigl( \|v\| + \|u\| \bigr)\,,
\]
and hence, since $\Delta^k \circ \sqrt{a}(\Delta)$ is a bounded 
linear operator, we have 
\begin{equation}   \label{eq:bounded-linear-functional}
  \left|(\sqrt{a}(\Delta))w \right|_{C^{j}}(x)~ \leq~ 
   C' \cdot C \cdot \left( \|\Delta^k \circ \sqrt{a}(\Delta) w\|~
 + \|w\| \right)~
  \leq~ C^* \cdot \|w\| 
\end{equation}
for some constant $C^*=C^*(U,j,k)$. In particular, $\sqrt a(\Delta) w$ is smooth.

For each $x$, define the linear functional $f_x\colon \Hcal \to \Rbb$ 
by $f_x(w) := (\sqrt{a}(\Delta) w)(x)$. 
From (\ref{eq:bounded-linear-functional}) with $j=0$, 
we see that the functional $f_x$ 
is bounded, and therefore there exists $k_x \in \Hcal$ so that 
for each $w \in \Hcal$ we have 
$\langle w, k_x \rangle = f_x(w)=(\sqrt{a}(\Delta) w)(x)$. 

We claim that $k_x(y)=k_y(x)$ for almost every $(x,y) \in M \times M$. 
Indeed, since $\sqrt{a}(\Delta)$ is self-adjoint, $k_x$ is real valued and $\langle \sqrt{a}(\Delta) w_-, \overline{w_+} \rangle = 
\langle  w_-, \sqrt{a}(\Delta)\overline{w_+} \rangle$ for each $w_-, w_+ \in \Hcal$. Hence 
\begin{multline*}
\int_{M \times M} w_-(x) \cdot w_+(y) \cdot k_x(y)\, (d\mu \times d\mu) (x,y)\\
=    \int_{M \times M} w_+(y) \cdot w_-(x) \cdot k_x(y)\, (d\mu \times d\mu) (x,y)\,.
\end{multline*}
By switching the roles of $x$ and $y$ in the latter integral and subtracting, 
we find that
\[  \int_{M \times M} \psi(x, y) \cdot 
    \bigl(k_x(y) -k_y(x) \bigr)\, (d\mu \times d\mu) (x,y)~ =~ 0\,,
\]
where $\psi(x,y) := w_-(x) \cdot w_+(y)$. Since such $\psi$ span a dense subspace of $L^2(M \times M, \mu \times \mu)$, the claim follows.

Define $K\colon M \times M \to \Rbb$ by
\[ K(x,y)~ :=~  \int_{M} k_x(z) \cdot k_y(z)\, \dmu(z)\,.  \]
Using the symmetry $k_y(z)=k_z(y)$ $\>\mu \times \mu$-a.e.\ and the fact that 
$\sqrt{a}\cdot \sqrt{a}=a$, one finds
that $K$ is an integral kernel for $a(\Delta)$.  
To finish the proof of the theorem, it suffices to show 
that $K$ is a smooth function. 

Let $\alpha \in \Nbb^d$ be a multi-index. Let $U_x$ and $U_y$ be neighborhoods of $x, y \in M$.
Using (\ref{eq:bounded-linear-functional}), we find that the map 
$ w \mapsto \partial^{\alpha}_x (\sqrt{a}(\Delta)w)(x)$ is 
a bounded linear functional on $\Hcal$. Hence there
exists $k_x^{\alpha} \in \Hcal$ so that 
\[ \langle k_x^{\alpha}, w \rangle~ =~ 
\partial^{\alpha}_x (\sqrt{a}(\Delta) w)(x)~
=~  \partial_{x}^{\alpha} \langle k_x, w \rangle\,.
\]
Thus, we have 
\[ \partial^{\beta}_y \partial^{\alpha}_x K(x,y)~ 
=~  \partial^{\beta}_y \partial_{x}^{\alpha} \langle k_x, k_y \rangle~
=~ \partial^{\beta}_y  \langle k_x^{\alpha}, k_y \rangle~
=~ \langle k_x^{\alpha}, k_y^{\beta} \rangle\,. \]
In particular, $K$ is smooth. 
\end{proof}

Let $e_\lambda(x, y)$ denote the kernel corresponding to the spectral projection $E_\lambda = \b1_{[0, \lambda]}(\Delta)$. 
Write $F_x^\lambda(y) := e_\lambda(x, y)$, so that $F_x^\lambda$ is an element of $\Hcal$ with norm 1.
For continuous $a \colon [0, \infty) \to \R$ with support in $[0, \lambda]$
and all $\nu \ge \lambda$, we have $\iprod{a(\Delta) F_x^\nu, F_y^\nu} = \iprod{a(\Delta) F_x^\lambda, F_y^\lambda}$, with
$|\iprod{a(\Delta) F_x^\lambda, F_y^\lambda}| \le \|a\|_\infty$ by the Cauchy--Schwarz inequality. Therefore,
$\lambda \mapsto e_\lambda(x, y)$ is a function of bounded variation and defines a bounded measure on the real line that
satisfies
\[
\int_0^\infty a(\nu) \,de_\nu(x, y)
~=~
\iprod{a(\Delta) F_x^\lambda, F_y^\lambda}
\]
whenever $a$ is a bounded Borel function with support in $[0, \lambda]$.

\begin{prop} \label{lem:kernel2}
Let $a \colon [0, \infty) \to \R$ be a nonnegative Borel function
such that for each nonnegative integer $k$, the map 
$\nu \mapsto a(\nu) \cdot \nu^k$ is bounded
on $[0, \infty)$.  
Then the kernel of $a(\Delta)$ is
\[
(x, y) \mapsto A_x(y) ~:=~ \int_0^\infty a(\lambda) \, de_\lambda(x, y)
\,.
\]
\end{prop}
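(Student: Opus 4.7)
The plan is to reduce to the case of $a$ with compact support, for which the result follows directly from the identity $\int_0^\infty a(\nu)\,de_\nu(x, y) = \iprod{a(\Delta) F_x^\lambda, F_y^\lambda}$ stated just before the proposition, and then pass to the limit in $\Hcal$.

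Suppose first that $a$ is supported in $[0, \lambda]$. Then $a(\Delta) = a(\Delta) E_\lambda$, so $a(\Delta) w \in \Hcal_\lambda$ for every $w \in \Hcal$. The reproducing property of $F_x^\lambda$ in $\Hcal_\lambda$ together with self-adjointness of $a(\Delta)$ gives
\[
(a(\Delta) w)(x) ~=~ \iprod{a(\Delta) w, F_x^\lambda} ~=~ \iprod{w, a(\Delta) F_x^\lambda}.
\]
Since $a(\Delta)$ commutes with $E_\lambda$, the vector $a(\Delta) F_x^\lambda$ lies in $\Hcal_\lambda$ as well, so a second application of the reproducing property combined with the identity preceding the proposition yields $(a(\Delta) F_x^\lambda)(y) = \iprod{a(\Delta) F_x^\lambda, F_y^\lambda} = A_x(y)$. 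Thus $A_x$ is the kernel of $a(\Delta)$ in the compactly supported case.

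For general $a$, set $a_\mu := a \cdot \mathbf{1}_{[0,\mu]}$ and $A_x^\mu(y) := \int_0^\mu a(\nu)\,de_\nu(x, y)$; by the first step $A_x^\mu$ is the kernel of $a_\mu(\Delta)$, and in fact $A_x^\mu = a(\Delta) F_x^\mu$. The spectral theorem together with the easy identity $\|F_x^{\mu_2} - F_x^{\mu_1}\|^2 = e_{\mu_2}(x,x) - e_{\mu_1}(x,x)$ for $\mu_1 \le \mu_2$ yields
\[
\|A_x^{\mu_2} - A_x^{\mu_1}\|^2 ~=~ \int_{\mu_1}^{\mu_2} a(\nu)^2\,de_\nu(x, x).
\]
The Sobolev estimate in the proof of Theorem~\ref{lem:kernel}, applied to the indicator function $a = \mathbf{1}_{[0,\nu]}$, shows that $e_\nu(x, x) = \|F_x^\nu\|^2$ grows at most polynomially in $\nu$; combined with the rapid-decay hypothesis on $a$ this forces $\int_0^\infty a(\nu)^2\,de_\nu(x, x) < \infty$, so $\{A_x^\mu\}$ is Cauchy in $\Hcal$ and converges to a limit $\widehat A_x \in \Hcal$. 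The bounded variation of $\nu \mapsto e_\nu(x, y)$ ensures the pointwise convergence $A_x^\mu(y) \to A_x(y)$, so $\widehat A_x$ coincides with $A_x$ as an $L^2$-class.

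To conclude, for any $w \in \Hcal_\mu$ the identity $a(\Delta) w = a_\mu(\Delta) w$ follows from $a_\mu(\Delta) = a(\Delta) E_\mu$, and the first step gives $(a(\Delta) w)(x) = \iprod{w, A_x^\mu}$. Since $F_x^\nu - F_x^\mu \perp \Hcal_\mu$ for $\nu > \mu$ and $a(\Delta)$ preserves $\Hcal_\mu^\perp$, we have $A_x^\nu - A_x^\mu \in \Hcal_\mu^\perp$, so passing to the limit $\nu \to \infty$ gives $\iprod{w, A_x^\mu} = \iprod{w, A_x}$ for $w \in \Hcal_\mu$. Density of $\bigcup_\mu \Hcal_\mu$ in $\Hcal$ then extends the identity $(a(\Delta) w)(x) = \iprod{w, A_x}$ to all $w \in \Hcal$, identifying $A_x$ as the kernel of $a(\Delta)$ at $x$. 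The main obstacle will be the $\Hcal$-convergence of $A_x^\mu$, which requires the polynomial bound on $e_\nu(x, x)$ extracted from the elliptic-regularity machinery behind Theorem~\ref{lem:kernel}; everything else is a careful unwinding of the reproducing-kernel structure already used to set up $F_x^\lambda$.
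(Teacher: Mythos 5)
Your proof is correct, but it follows a genuinely different route from the paper's. The paper's proof takes the smooth kernel $K$ of $a(\Delta)$, already furnished by Theorem~\ref{lem:kernel}, and identifies it with $A_x$ in a single short chain of equalities: for $u \in \Hcal_\nu$ one has $\iprod{u, K_x} = (a(\Delta)u)(x) = \iprod{u, \int_0^\nu a(\lambda)\,dF_x^\lambda} = \iprod{u, A_x}$, exploiting that $\iprod{E_\lambda u, F_x^\nu} = \iprod{u, F_x^\lambda}$ and that the spectral measure of $u$ lives in $[0,\nu]$; density of $\bigcup_\nu \Hcal_\nu$ then gives $K_x = A_x$. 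You instead construct $A_x$ from scratch as the $\Hcal$-limit of the truncations $A_x^\mu = a(\Delta)F_x^\mu$, proving the Cauchy estimate $\|A_x^{\mu_2} - A_x^{\mu_1}\|^2 = \int_{\mu_1}^{\mu_2} a(\nu)^2\,de_\nu(x,x)$ and controlling it via the polynomial bound $e_\nu(x,x) = O(\nu^{2k})$ that you correctly extract from the Sobolev estimate \eqref{eq:bounded-linear-functional} with $\sqrt{a}(\Delta) = E_\nu$; you then match the $L^2$-limit with the pointwise integral and finally test against $\Hcal_\mu$. Your argument is longer, but it buys something the paper leaves implicit: an explicit verification that $\int_0^\infty a(\lambda)\,dF_x^\lambda$ converges in $\Hcal$, rather than relying on the pre-established kernel $K$ to carry that information. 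The paper's approach is shorter precisely because it reuses Theorem~\ref{lem:kernel} as a black box, whereas your approach reconstitutes part of that machinery (the Bessel/Sobolev bound on $\|F_x^\nu\|$) in service of a direct convergence proof. Both are valid; the paper's is tighter given what has already been established in the appendix.
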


\begin{proof}
Let $K$ be the kernel given by Theorem \ref{lem:kernel} and $K_x(y) := K(x, y)$.
Then for $x \in M$ and $u \in \Hcal_\nu$, we have $a(\Delta)u \in \Hcal_\nu$ and
\begin{align*}
\iprod{u, K_x}
&~=~
a(\Delta)u(x)
~=~
\Bigiprod{\int_0^\nu a(\lambda) \,dE_\lambda u, F_x^\nu}
~=~
\int_0^\nu a(\lambda) \,d\iprod{E_\lambda u, F_x^\nu}
\\ &~=~
\int_0^\nu a(\lambda) \,d\iprod{u, F_x^\lambda}
~=~
\Bigiprod{u, \int_0^\nu a(\lambda) \,dF_x^\lambda}
~=~
\iprod{u, A_x}
\,.
\end{align*}
Since $\iprod{u, K_x} = \iprod{u, A_x}$ for such $u$ and $\bigcup_\nu \Hcal_\nu$ is
dense in $\Hcal$, we obtain that $K_x = A_x$, as desired.
\end{proof}

\begin{prop} \label{prop:gradient-estimate}
Let $K$ be the kernel of $a(\Delta)$ constructed in Theorem \ref{lem:kernel}.
For each $x$, the function $y \mapsto |\nabla_x K(x,y)|^2$ is integrable.  
\end{prop}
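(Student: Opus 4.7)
The plan is to leverage the decomposition $K(x,y) = \iprod{k_x, k_y}$ produced in the proof of Theorem \ref{lem:kernel}, together with the auxiliary vectors $k_x^\alpha \in \Hcal$ that satisfy $\iprod{w, k_x^\alpha} = \partial_x^\alpha (\sqrt{a}(\Delta)w)(x)$, and for which that same proof established the pointwise identity $\partial^\beta_y \partial_x^\alpha K(x,y) = \iprod{k_x^\alpha, k_y^\beta}$.

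First, fix $x \in M$ and work in the coordinate chart $(x_1, \ldots, x_d)$ on a neighborhood $U$ of $x$ used in the proof of Theorem \ref{lem:kernel}. In these coordinates,
\[
|\nabla_x K(x,y)|^2 ~=~ \sum_{i,j} g^{ij}(x)\, \partial_{x_i} K(x,y)\, \partial_{x_j} K(x,y),
\]
a finite linear combination with coefficients depending only on $x$. By the Cauchy--Schwarz inequality in $L^2(M,\mu)$, it therefore suffices to show that, for each index $i$, the function $y \mapsto \partial_{x_i} K(x,y)$ lies in $L^2(M,\mu)$.

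Specializing the identity from Theorem \ref{lem:kernel} to the multi-indices $\alpha$ equal to a unit vector at position $i$ and $\beta = 0$ gives $\partial_{x_i} K(x,y) = \iprod{k_x^{\alpha}, k_y}$ for every $y \in M$. Applying the defining property of $k_y$, namely $\iprod{w, k_y} = (\sqrt{a}(\Delta)w)(y)$ for all $w \in \Hcal$, with the choice $w := k_x^{\alpha} \in \Hcal$, we obtain the pointwise identity
\[
\partial_{x_i} K(x,y) ~=~ \bigl(\sqrt{a}(\Delta)\, k_x^{\alpha}\bigr)(y)
\]
valid for every $y \in M$.

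Finally, the hypothesis on $a$, applied with $k = 0$, implies that $a$, and hence $\sqrt{a}$, is bounded on $[0,\infty)$; so $\sqrt{a}(\Delta)$ is a bounded operator on $\Hcal$ by the functional calculus. Consequently $\sqrt{a}(\Delta)\, k_x^{\alpha} \in \Hcal$ with finite $L^2$-norm, and the smooth function $\partial_{x_i} K(x, \cbuldot)$ is the continuous representative of this $L^2$ class. Integrating its square over $M$ yields a finite value, which combined with the first paragraph completes the proof. The only slightly delicate point, which is where the proof can appear fragile, is ensuring that the identification $\partial_{x_i} K(x,y) = \bigl(\sqrt{a}(\Delta)\, k_x^{\alpha}\bigr)(y)$ holds pointwise and not merely almost everywhere; this comes for free because both sides equal the inner product $\iprod{k_x^{\alpha}, k_y}$ for every single $y \in M$, so no appeal to quotienting by null sets is required.
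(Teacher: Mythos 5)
Your proof is correct and follows essentially the same route as the paper: identify $\partial_{x_i}K(x,\cdot)$ with $\iprod{k_x^{\alpha},k_{\cbuldot}}=(\sqrt{a}(\Delta)k_x^{\alpha})(\cdot)\in\Hcal$, then sum over $i$. The only cosmetic difference is that the paper chooses normal coordinates at $x$ (so $|\nabla_x K|^2$ is a plain sum of squared partials), whereas you work in an arbitrary chart and invoke Cauchy--Schwarz to handle the $g^{ij}(x)$ coefficients.
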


\begin{proof}
Choose normal coordinates $(x_1, \ldots, x_d)$ about the point $x$, 
and let $\partial_i$ be the associated coordinate vector fields. 
By (\ref{eq:bounded-linear-functional}), for each $i$, there exists $k_x^i \in \Hcal$ so that
$\langle k_x^i, u\rangle = \partial_i \langle k_x,u \rangle$
for each $u \in \Hcal$. 
Thus, 
\begin{equation*}
|\nabla_x K(x,y)|^2~ 
=~ \sum_i |\partial_i \langle k_x, k_y \rangle |^2~
  =~ \sum_i \langle k_x^i, k_y \rangle^2.
\end{equation*}
On the other hand, the defining property of $k_y$ yields
$\langle k_x^i, k_y \rangle = (\sqrt{a}(\Delta)k_x^i)(y)$. 
Since $\sqrt{a}(\Delta)$ maps $\Hcal$ to $\Hcal$, the 
function $y \mapsto (\sqrt{a}(\Delta)k_x^i)(y)$ is square-integrable. The claim 
follows from summing over $i$.
\end{proof}



\projects{\noindent 
The work of C.J.\ is partially supported by a Simons collaboration grant.
The work of R.L.\ is partially supported by the National
Science Foundation under grants DMS-1007244 and DMS-1612363.}


\begin{thebibliography}{99}
\parskip=0.7pt





\bibitem{Anker} 
\RMIauthor{Anker, J.-P. and Ostellari, P.}
\RMIpaper{The heat kernel on noncompact symmetric spaces}
In \RMIbook{Lie groups and Symmetric Spaces, {\rm 27--46}} 
Amer. Math. Soc. Transl. Ser. 2, {\bf 210}, 
Amer. Math. Soc., Providence, RI, 2003. 


\bibitem{Berger}  \RMIauthor{Berger, M.}
\RMIbook{A Panoramic View of Riemannian Geometry}
Springer-Verlag, Berlin, 2003.

\bibitem{BJS}
\RMIauthor{Bers, L., John, F., and Schechter, M.}
\RMIbook{Partial Differential Equations} 
With supplements by Lars G\aa rding and A. N. Milgram. With a preface by A. S. Householder. 
Reprint of the 1964 original. Lectures in Applied Mathematics, 3A. 
American Mathematical Society, Providence, RI, 1979.

\bibitem{BertiProc} 
\RMIauthor{Berti, M. and  Procesi, M.} 
\RMIpaper{Nonlinear wave and Schr\"odinger equations on compact Lie groups and
homogeneous spaces}
\RMIjournal{ Duke Math. J.} {\bf 159} (2011), no. 3, 479--538.

\bibitem{Davies} 
\RMIauthor{Davies, E.B.}
\RMIbook{ Heat Kernels and Spectral Theory}
Cambridge Tracts in Mathematics, {\bf 92}.
Cambridge University Press, Cambridge, 1990.

\bibitem{Cheeger-Ebin}
\RMIauthor{Cheeger, J. and Ebin, D.} 
\RMIbook{Comparison Theorems in Riemannian Geometry} 
Revised reprint of the 1975 original. AMS Chelsea Publishing, Providence, RI, 2008.


\bibitem{EfShu}
\RMIauthor{ Efremov, D.V. and  Shubin, M.A.}
\RMIpaper{Spectrum distribution function and variational principle for
  automorphic operators on hyperbolic space} In 
\RMIbook{S\'eminaire sur les \'{E}quations aux {D}\'eriv\'ees
  {P}artielles, {\rm 1988--1989}} Exp.\ No.\ VIII, 19. \'Ecole Polytech.,
  Palaiseau, 1989.

\bibitem{Gaffney} 
\RMIauthor{Gaffney, M.P.}
\RMIpaper{Hilbert space methods in the theory of harmonic integrals}
\RMIjournal{Trans. Amer. Math. Soc.} {\bf 78}, (1955), 426--444.

\bibitem{GHL} 
\RMIauthor{Gallot, S.,  Hulin, D., and Lafontaine, J.} 
\RMIbook{Riemannian Geometry}
Third edition. Universitext. Springer-Verlag, Berlin, 2004

\bibitem{GN13} 
\RMIauthor{Grebenkov, D.S. and  Nguyen, B.-T.}
\RMIbook{Geometrical structure of Laplacian eigenfunctions}
\RMIjournal{SIAM Rev.} {\bf 55} (2013), no. 4, 601--667. 

\bibitem{Gromov} 
\RMIauthor{Gromov, M.} 
\RMIbook{Metric Structures for Riemannian and Non-Riemannian Spaces}
Progress in Mathematics, {\bf 152}. BirkhÃ¤user Boston, Inc., Boston, MA, 1999. 

\bibitem{Gunther} 
\RMIauthor{G\"unther, P.} 
\RMIpaper{Einige S\"atze \"uber das Volumenelement eines Riemannschen Raumes}
{\em Publ. Math. Debrecen} {\bf 7} (1960), 78--93.

\bibitem{Helgason} 
\RMIauthor{Helgason, S.}  
\RMIbook{Differential Geometry and Symmetric Spaces.}
 Pure and Applied Mathematics, Vol. XII. Academic Press, New York-London, 1962.

\bibitem{Hormander66}
\RMIauthor{H\"ormander, L.} 
\RMIpaper{On the Riesz means of spectral functions and eigenfunction expansions for elliptic differential operators.} In 
\RMIbook{Some Recent Advances in the Basic Sciences, {\rm 155--202}} Vol. 2, 
Belfer Graduate School of Science, Yeshiva Univ., New York, 1966.

\bibitem{Hormander}
\RMIauthor{H\"ormander, L.} 
\RMIpaper{The spectral function of an elliptic operator}
\RMIjournal{Acta Math.} {\bf 121} (1968), 193--218. 

\bibitem{LaxPhillips} 
\RMIauthor{Lax, P.D. and  Phillips, R.S.}
\RMIpaper{The asymptotic distribution of lattice points in Euclidean and non-Euclidean spaces}
\RMIjournal{J. Funct. Anal.} {\bf 46} (1982), no. 3, 280--350. 

\bibitem{Li} 
\RMIauthor{Li, P.}
\RMIpaper{Eigenvalue estimates on homogeneous manifolds}
\RMIjournal{Comment. Math. Helv.} {\bf 55} (1980), no. 3, 347--363.

\bibitem{Li-Book} 
\RMIauthor{Li, P.}
\RMIbook{Geometric Analysis} 
Cambridge Studies in Advanced Mathematics, {\bf 134}. 
Cambridge University Press, Cambridge, 2012.

\bibitem{Li-Yau}  
\RMIauthor{Li, P. and  Yau, S.-T.}
\RMIpaper{Estimates of eigenvalues of a compact Riemannian manifold} In  
\RMIbook{Geometry of the Laplace Operator, {\rm  205--239}}
  Proc. Sympos. Pure Math., XXXVI, Amer. Math. Soc., Providence, RI, 1980. 

\bibitem{Shayan} 
\RMIauthor{Lyons, R. and  Oveis Gharan, S.}
\RMIpaper{Sharp bounds on random walk eigenvalues via spectral embedding}
\RMIjournal{IMRN}, rnx082, to appear.

\bibitem{NaPa}  
\RMIauthor{Natalini P. and Palumbo, B.}
\RMIpaper{Inequalities for the incomplete gamma function}
\RMIjournal{Math. Inequal. Appl.} {\bf 3} (2000), no. 1, 69--77.

\bibitem{Pittet}
\RMIauthor{Pittet, Ch.}
\RMIpaper{The isoperimetric profile of homogeneous {R}iemannian manifolds}
\RMIjournal{J. Differential Geom.}
    {\bf 54} (2000) {no. 2}, {255--302}.

\bibitem{Port}
\RMIauthor{Portegies, J.W.}
\RMIpaper{Embeddings of Riemannian manifolds with heat kernels and eigenfunctions}
\RMIjournal{Commun. Pure Appl. Math.} {\bf 69} (2016), no. 3, 478--518.

\bibitem{Reed-Simon-II}
\RMIauthor{Reed, M. and Simon, B.} 
\RMIbook{Methods of Modern Mathematical Physics. II. Fourier Analysis, Self-adjointness.} 
Academic Press [Harcourt Brace Jovanovich, Publishers], New York-London, 1975. 

\bibitem{SolI} 
\RMIauthor{Solomon, B.} 
\RMIpaper{The harmonic analysis of cubic isoparametric minimal hypersurfaces. I. Dimensions 3 and 6}
\RMIjournal{Amer. J. Math.} {\bf 112} (1990), no. 2, 157--203. 

\bibitem{SolII} 
\RMIauthor{Solomon, B.} 
\RMIpaper{The harmonic analysis of cubic isoparametric minimal hypersurfaces. II. Dimensions 12 and 24}
\RMIjournal{Amer. J. Math.} {\bf 112} (1990), no. 2, 205--241. 

\bibitem{Strichartz} 
\RMIauthor{Strichartz, R.S.} 
\RMIpaper{Analysis of the Laplacian on the complete Riemannian manifold}
\RMIjournal{J. Funct. Anal.} {\bf 52} (1983), no. 1, 48--79. 



\bibitem{Va} 
\RMIauthor{Varadarajan, V.S.}
\RMIbook{Lie Groups, Lie Algebras, and Their Representations} 
Graduate Texts in Mathematics, {\bf 102}. Springer-Verlag, New York, 1984.

\bibitem{Varop} 
\RMIauthor{Varopoulos,  N. Th.}
\RMIpaper{Analysis on Lie groups}
\RMIjournal{Rev. Mat. Iberoamericana} {\bf 12} (1996), no. 3, 791--917. 






\end{thebibliography}
\end{document}